\numberwithin{equation}{section}
\newtheorem{theorem}{Theorem}[section]
\newtheorem{lemma}[theorem]{Lemma}
\newtheorem{corollary}[theorem]{Corollary}
\newtheorem{proposition}[theorem]{Proposition}
\theoremstyle{definition}
\newtheorem{definition}[theorem]{Definition}
\theoremstyle{remark}
\begin{document}

\title{Wavelet characterization of local Muckenhoupt weighted 
Lebesgue spaces with variable exponent}

\author{Mitsuo Izuki, Toru Nogayama, Takahiro Noi and Yoshihiro Sawano}

\maketitle

\begin{abstract}
Our aim in this paper is to characterize
local Muckenhoupt weighted Lebesgue spaces with variable exponent
by compactly supported smooth wavelets.
We also investigate
necessary and sufficient conditions
for the corresponding modular inequalities to hold.
One big achievement is that the weights with exponetial growth
can be handled in the framework of variable exponents.
\end{abstract}

{\bf Key words:} variable exponent, wavelet, local Muckenhoupt weight, modular inequality 
\\
{\bf AMS Subject Classification:} 42B35, 42C40.


\section{Introduction}

Wavelets with proper decay and smoothness
give us characterizations of various function spaces.
In fact we can obtain  norms
equivalent to those spaces by using 
some square functions involving wavelet coefficients.
The first author 
\cite{IzukiGMJ}
and Kopaliani \cite{Kop} have 
initially and independently obtained the wavelet characterizations
of Lebesgue spaces with variable exponent.
Later the characterizations 
have been generalized by \cite{INS2015} 
to the Muckenhoupt weighted setting.

The theory of Lebesgue spaces with variable exponent
goes back to \cite{Orlicz31}.
After that, Nakano investigated 
Lebesgue spaces with variable exponent
in his Japanese books \cite{Nakano50,Nakano51}.
The theory of Lebesgue spaces with variable exponent
developed after
Kov$\acute{\rm{a}}\check{\rm{c}}$ik
and R$\acute{\rm{a}}$kosn\'ik
investigated Sobolev spaces with variable exponent in 90's
\cite{KR}.
Among others, Diening investigated the boundedness
of the Hardy--Littlewood maximal operator
in \cite{DieningMIA2004},
which paved the way to exhaustive investigation
of variable exponent Lebesgue spaces.
For example, 
Cruz-Uribe, SFO, Fiorenza and Neugebauer
further studied the boundedness of the Hardy--Littlewood maximal operator
in \cite{CFN2003,CFN2004}.
We refer to \cite{CF-book,INS2014}
as well as \cite[p. 447]{Sawano-text-2018}
for more details.
Moreover 
the study on generalization
of the classical Muckenhoupt weights
in terms of variable exponent
has been developed \cite{CDH-2011,CFN-2012}.
The second and fourth authors \cite{NS-local}
have defined the class of local Muckenhoupt weights
and obtained boundedness of
the local Hardy--Littlewood maximal operator
motivated by Rychkov \cite{Rychkov}.

The goal of this paper is to establish the theory
of wavelets on local weighted Lebesgue spaces
with variable exponent,
which is a follow-up of the paper \cite{NS-local}.
We seek to characterize the spaces
in terms of 
the inhomogeneous wavelet expansion.

In this paper we use the following notation of variable exponents.
Let $p(\cdot)\,:\,\mathbb{R}^n\to[1,\infty)$ be a measurable function,
and let $w$ be a weight, that is, a measurable function 
which is positive almost everywhere.
Then, we define the weighted variable Lebesgue space 
$L^{p(\cdot)}(w)$ to be the set of all measurable functions $f$ such that
for some $\lambda>0$,
\[
\int_{\mathbb{R}^n}
\left(\frac{|f(x)|}{\lambda}\right)^{p(x)}w(x)
\,{\rm d}x<\infty.
\]
When we investigate the boundedness 
of the Hardy--Littlewood maximal operator
$M$ defined by (\ref{eq:191130-1}),
the following two conditions seem standard:
\begin{itemize}
\item[$(1)$]
An exponent $r(\cdot)$ satisfies the  local log-H\"older continuity condition
if there exists $C>0$ such that
\begin{equation}\label{eq:190613-1}
{\rm L H}_0\,:\,|r(x)-r(y)|\le \frac{C}{-\log|x-y|}, 
\quad x,y \in \mathbb{R}^n, \quad |x-y|\le \frac12.
\end{equation}
The set ${\rm L H}_0$ collects all exponents $r(\cdot)$
which satisfies (\ref{eq:190613-1}). 

\item[$(2)$]
An exponent $r(\cdot)$ satisfies 
the log-H\"older continuity condition
at $\infty$ if there exist 
$C>0$ and $r_\infty \in [0,\infty)$ such that
\begin{equation}\label{eq:190613-2}
{\rm L H}_\infty\,:\,|r(x)-r_\infty|\le \frac{C}{\log(e+|x|)}, 
\quad x \in \mathbb{R}^n.
\end{equation}
The set ${\rm L H}_\infty$ collects all exponents $r(\cdot)$
which satisfies (\ref{eq:190613-2}).
\end{itemize}

We also recall the theory of wavelets.
Based on the fundamental wavelet theory
(see \cite{Dau-1988,LM,Meyer-book,Sawano-text-2018,Woj-book}),
we can construct
compactly supported $C^1$-functions
$\varphi$ and $\psi^l$ ($l=1,2, \ldots , 2^n-1$)
so that the following conditions are satisfied:
\begin{enumerate}
\item
For any $J\in \mathbb{Z}$, 
the system
\[
\left\{  \varphi_{J,k}, \, \psi^l_{j,k}  \, : \, 
k\in \mathbb{Z}^n , \, j\ge J, \, l=1,2, \ldots , 2^n-1 \right\}
\]
is an orthonormal basis of $L^2(\mathbb{R}^n)$.
Here given a 
function $F$ defined on $\mathbb{R}^n$, we write 
\[
F_{j,k}\equiv 2^{\frac{j n}{2}}F(2^j \cdot -k)
\]
for $j \in {\mathbb Z}$ and $k\in {\mathbb Z}^n$.

\item
The functions
$\varphi$ and $\psi^l$ ($l=1,2, \ldots , 2^n-1$)
belong to $C^1(\mathbb{R}^n)$.
In addition, 
they are real-valued 
and compactly supported with
$\mathrm{supp}\varphi =\mathrm{supp}\psi^l=[0,2N-1]^n$
for some $N\in \mathbb{N}$.
\end{enumerate}
We also define
$\displaystyle 
\chi_{j,k} \equiv 
2^{\frac{j n}{2}}\chi_{Q_{j,k}}
$
for $j \in {\mathbb Z}$ and $k=(k_1,k_2,\ldots,k_n) \in {\mathbb Z}^n$,
where $Q_{j,k}$ is the dyadic cube 
given by (\ref{define-Qjk}).
Then using
the $L^2$-inner product
$\langle \cdot,\cdot \rangle$,
we define the three square functions by
\begin{align*}
Vf&\equiv \left(  \sum_{k\in \mathbb{Z}^n} \left| 
\langle f,\varphi_{J,k} \rangle \varphi_{J,k} \right|^2 \right)^{\frac12} , \\
W_1 f&\equiv \left(  
\sum_{l=1}^{2^n-1}
\sum_{j=J}^\infty
\sum_{k\in \mathbb{Z}^n} \left| 
\langle f,\psi_{j,k}^l \rangle \psi_{j,k}^l \right|^2 \right)^{\frac12}, \\
W_2 f&\equiv \left(  
\sum_{l=1}^{2^n-1}
\sum_{j=J}^\infty
\sum_{k\in \mathbb{Z}^n} \left| 
\langle f,\psi_{j,k}^l \rangle \chi_{j,k} \right|^2 \right)^{\frac12} .
\end{align*}
Here $J$ is a fixed integer.

Denote by $\mathcal{Q}$ the set of all compact cubes
whose edges are parallel to coordinate axes.
We will mix the notions considered 
in \cite{CFN-2012,Rychkov}
to define the local Muckenhoupt class as follows:
\begin{definition}
Given an exponent $p(\cdot)\,:\,\mathbb{R}^n \to [1,\infty)$ and a weight $w$, 
we say that $w \in A^{\rm loc}_{p(\cdot)}$ if 
$
[w]_{A^{\rm loc}_{p(\cdot)}}
\equiv 
\sup\limits_{Q \in {\mathcal Q}, |Q|\le1} 
|Q|^{-1} \|\chi_Q\|_{L^{p(\cdot)}(w)}\|\chi_Q\|_{L^{p'(\cdot)}(\sigma)}<\infty,
$
where $\sigma \equiv w^{-\frac{1}{p(\cdot)-1}}$
and the supremum is taken over all cubes $Q \in \mathcal{Q}$
with volume less than $1$.
\end{definition}

Unlike the class of $A_{p}$ and $A_{p(\cdot)}$,
we can consider $w(x)=\exp(\alpha|x|)$ for any $\alpha \in {\mathbb R}$.
Another typical example is $w(x)=(1+|x|)^A$ for any $A \in {\mathbb R}$.

\begin{theorem}\label{thm:191125-1}
Let $p(\cdot) \in {\rm L H}_0 \cap {\rm L H}_{\infty}$
satisfy
$1<p_-
\equiv
{\rm essinf}_{x \in {\mathbb R}^n} p(x) \le p_+
\equiv
{\rm esssup}_{x \in {\mathbb R}^n} p(x) <\infty$, 
and let
$w\in A_{p(\cdot)}^{\mathrm{loc}}$.
Fix $J \in {\mathbb Z}$ arbitrarily.
Then there exists a constant $C>0$ such that
\begin{align}\label{eq:191125-4}
C^{-1} \| f \|_{L^{p(\cdot)}(w)}
\le 
\| Vf \|_{L^{p(\cdot)}(w)}  + \| W_1 f \|_{L^{p(\cdot)}(w)}
\le C\, \| f \|_{L^{p(\cdot)}(w)} ,\\
\label{eq:191125-5}
C^{-1} \| f \|_{L^{p(\cdot)}(w)}
\le 
\| Vf \|_{L^{p(\cdot)}(w)}  + \| W_2 f \|_{L^{p(\cdot)}(w)}
\le C\, \| f \|_{L^{p(\cdot)}(w)}
\end{align}
for all $f\in L^1_{\rm loc}$.
\end{theorem}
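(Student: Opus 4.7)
The strategy is to reduce every estimate to the scalar and vector-valued boundedness of the local Hardy--Littlewood maximal operator $M^{\mathrm{loc}}$ on $L^{p(\cdot)}(w)$, both of which are established under the present hypotheses in \cite{NS-local}. The key geometric input is the compact support $\mathrm{supp}\,\varphi=\mathrm{supp}\,\psi^l=[0,2N-1]^n$: for every $j\ge J$ the supports of the $\psi^l_{j,k}$ form a family with overlap at most $(2N-1)^n$, each contained in a cube $Q^{*}_{j,k}$ of side $(2N-1)2^{-j}$, so the whole argument remains local and the use of $A^{\mathrm{loc}}_{p(\cdot)}$ rather than a global class is natural.

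For the upper bound, a crude pointwise computation yields
\[
|\langle f,\varphi_{J,k}\rangle\,\varphi_{J,k}(x)|\le C\,M^{\mathrm{loc}}f(x)\,\chi_{Q^{*}_{J,k}}(x),
\]
and after summing squares over the finitely many $k$ with $x\in Q^{*}_{J,k}$ one obtains $Vf(x)\le C\,M^{\mathrm{loc}}f(x)$; the scalar boundedness of $M^{\mathrm{loc}}$ then handles $V$. For $W_1$ and $W_2$ I would introduce the intermediate square function
\[
\mathfrak{S}f(x)\equiv\Big(\sum_{l=1}^{2^n-1}\sum_{j\ge J}\sum_{k\in\mathbb{Z}^n}|\langle f,\psi^l_{j,k}\rangle|^2\,2^{jn}\,\chi_{Q_{j,k}}(x)\Big)^{1/2},
\]
which dominates $W_2 f$ directly and, via $|\psi^l_{j,k}|\le C\,2^{jn/2}\chi_{Q^{*}_{j,k}}$ together with bounded overlap, also dominates $W_1 f$. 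The crucial estimate $\|\mathfrak{S}f\|_{L^{p(\cdot)}(w)}\lesssim\|f\|_{L^{p(\cdot)}(w)}$ would then follow by combining the $L^2$ Parseval identity $\|Vf\|_{L^2}^2+\|\mathfrak{S}f\|_{L^2}^2=\|f\|_{L^2}^2$ with a Rubio de Francia--type extrapolation for $A^{\mathrm{loc}}_{p(\cdot)}$, or equivalently by the sharper pointwise bound $|\langle f,\psi^l_{j,k}\rangle|\,\chi_{Q_{j,k}}(x)\le C\,2^{-jn/2}M^{\mathrm{loc}}f(x)$ followed by the vector-valued Fefferman--Stein inequality for $M^{\mathrm{loc}}$ from \cite{NS-local}.

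For the lower bound, starting from the reproducing formula
\[
f=\sum_{k\in\mathbb{Z}^n}\langle f,\varphi_{J,k}\rangle\,\varphi_{J,k}+\sum_{l=1}^{2^n-1}\sum_{j\ge J}\sum_{k\in\mathbb{Z}^n}\langle f,\psi^l_{j,k}\rangle\,\psi^l_{j,k},
\]
I would pair $f$ against an arbitrary $g$ in the unit ball of the dual space $L^{p'(\cdot)}(\sigma)$, using that $\sigma=w^{-1/(p(\cdot)-1)}\in A^{\mathrm{loc}}_{p'(\cdot)}$ by the standard duality for the local Muckenhoupt class. Cauchy--Schwarz in the wavelet indices followed by the variable-exponent H\"older inequality gives
\[
|\langle f,g\rangle|\le C\bigl(\|Vf\|_{L^{p(\cdot)}(w)}+\|W_i f\|_{L^{p(\cdot)}(w)}\bigr)\bigl(\|Vg\|_{L^{p'(\cdot)}(\sigma)}+\|W_i g\|_{L^{p'(\cdot)}(\sigma)}\bigr),
\]
and applying the upper bound already proved to $g$ with the weight $\sigma$ yields the reverse inequality after taking the supremum over $g$.

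The hard part will be establishing $\|\mathfrak{S}f\|_{L^{p(\cdot)}(w)}\lesssim\|f\|_{L^{p(\cdot)}(w)}$: the constant-exponent argument (via Calder\'on--Zygmund theory or extrapolation) must now be run with a variable exponent and a \emph{local} weight class simultaneously, which calls on the full machinery developed in \cite{NS-local}. Checking that weights with exponential growth, which lie in $A^{\mathrm{loc}}_{p(\cdot)}$ but in no classical $A_q$-class, are genuinely covered is the essential novelty and is precisely what forces the entire setup to remain local.
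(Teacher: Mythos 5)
Your outline matches the paper's architecture in two places: the lower bound via pairing $f$ against a dual function and the reduction of $Vf$ to $M^{\mathrm{loc}}f$. Both of those steps are correct and are what the paper does. The order of reduction for $W_1$ and $W_2$ is reversed — the paper shows $\chi_{j,k}\lesssim (M^{\mathrm{loc}})^{4N+10}\varphi_{j,k}$ so that the $W_2$ estimate follows \emph{from} the $W_1$ estimate together with Proposition \ref{thm 191031-3}, whereas you introduce the intermediate $\mathfrak{S}$ dominating both — but that is a harmless rerouting.

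The genuine gap is in the heart of the proof, the estimate $\|\mathfrak{S}f\|_{L^{p(\cdot)}(w)}\lesssim\|f\|_{L^{p(\cdot)}(w)}$ (equivalently $\|W_1 f\|_{L^{p(\cdot)}(w)}\lesssim\|f\|_{L^{p(\cdot)}(w)}$), where both of your proposed mechanisms fail. The extrapolation route cannot start from the $L^2$ Parseval identity alone: Rubio de Francia-type extrapolation, even in the local version of \cite{NS-local}, requires a uniform weighted input $\|\mathfrak{S}f\|_{L^{p_0}(w_0)}\lesssim\|f\|_{L^{p_0}(w_0)}$ over all $w_0\in A^{\mathrm{loc}}_{p_0}$ for some fixed $p_0$, which is essentially the full constant-exponent local-weight theorem of Lemari\'e-Rieusset and would itself need proof; a single unweighted $L^2$ endpoint does not suffice. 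The ``equivalent'' pointwise alternative is worse: from $|\langle f,\psi^l_{j,k}\rangle|\chi_{Q_{j,k}}(x)\le C\,2^{-jn/2}M^{\mathrm{loc}}f(x)$ one gets
\[
\mathfrak{S}f(x)^2 \le C\sum_{l=1}^{2^n-1}\sum_{j\ge J}\sum_{k}2^{jn}\bigl(2^{-jn/2}M^{\mathrm{loc}}f(x)\bigr)^2\chi_{Q_{j,k}}(x)=C\sum_{l,j}\bigl(M^{\mathrm{loc}}f(x)\bigr)^2=\infty,
\]
since for every fixed $j$ and $l$ there is exactly one $k$ with $x\in Q_{j,k}$; no vector-valued Fefferman--Stein inequality can repair a pointwise estimate that already diverges. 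What the paper actually does here is linearize the square function with a Rademacher (Khinchine) argument,
\[
W_1 f(x)\lesssim\Bigl(\frac{1}{\pi}\int_0^{2\pi}\Bigl|\sum_\mu\,\mathrm{sgn}(\sin 2^\mu t)\langle f,\psi_{\Theta(\mu)}\rangle\psi_{\Theta(\mu)}(x)\Bigr|^{p_-}\,\mathrm{d}t\Bigr)^{1/p_-},
\]
then verify that each linearized operator is a \emph{generalized local Calder\'on--Zygmund operator} (its kernel is compactly supported near the diagonal because $\varphi,\psi^l$ have compact support), and invoke Theorem \ref{thm:191125-2}, whose proof in Section \ref{s3} uses the Lerner--Hyt\"onen median/sparse decomposition together with the vector-valued maximal inequality. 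That Calder\'on--Zygmund step, and in particular the \emph{localization} of the kernel that makes the $A^{\mathrm{loc}}_{p(\cdot)}$ hypothesis adequate, is the missing idea in your sketch; you gesture at ``Calder\'on--Zygmund theory or extrapolation'' but the two concrete arguments you actually propose do not close the argument.
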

Our result extends
the one in \cite{Lemarie}
to the variable exponent setting
and the one in \cite{INS2015}
to the local weight setting.
Also, the corresponding modular inequality fails,
which can be proved in a similar way to \cite{IzukiGMJ};
see Section \ref{s5}.
Remark that $L^{p(\cdot)}(w)$ is a subset of $L^1_{\rm loc}$,
since $w \in A_{p(\cdot)}^{\rm loc}$;
see Section \ref{s2} for more.

The proof of Theorem \ref{thm:191125-1} uses
the boundedness of generalized local Calder\'{o}n--Zygmund operators.
Here and below, given a function space $X$,
$X_{\rm c}$ denotes the set of all functions $f \in X$ with compact support.
An $L^2$-bounded linear operator $T$
is a $($generalized$)$ local Calder\'{o}n--Zygmund operator
$($with the kernel $K$$)$,
if it satisfies the following conditions:
\index{generalized Calderon--Zygmund operator@(generalized) Calder\'{o}n--Zygmund operator}
\begin{enumerate}
\item[$(1)$]
There exists
$K \in L^{1}_{\rm loc}({\mathbb R}^n \times {\mathbb R}^n \setminus \{(x,x)\,:\,x \in {\mathbb R}^n\})$
such that,
for all $f \in L^2_{\rm c}({\mathbb R}^n)$, we have
\begin{equation}\label{eq:sing1}
T f(x)=\int_{{\mathbb R}^n}K(x,y)f(y){\rm d}y
\mbox{ for almost all } x \notin {\rm supp}(f).
\end{equation}
\item[$(2)$]
There exist constants $\gamma$, $D_{1}$ and $D_{2}$ such that
the two conditions below hold
for all $x,y,z \in {\mathbb R}^n$;
\begin{enumerate}
\item[$(i)$]
local size condition:
\index{size condition@size condition}
\begin{align}
\label{eq:sing3}
|K(x,y)| \le D_{1}|x-y|^{-n}\chi_{[-\gamma,\gamma]^n}(x-y)
\end{align}
if $x \ne y$,
\item[$(ii)$]
H\"{o}rmander's condition:
\index{Hormander's condition@H\"{o}rmander's condition}
\begin{align}
\label{eq:sing2}
|K(x,z)-K(y,z)|
+
|K(z,x)-K(z,y)|
\le D_{2}\frac{|x-y|}{|x-z|^{n+1}}
\end{align}
if $0<2|x-y|<|z-x|$.
\end{enumerate}
\end{enumerate}
This is an analogue of generalized singular integral operators,
which requires
\begin{align}
\label{eq:sing3a}
|K(x,y)| \le D_{1}|x-y|^{-n}
\end{align}
instead of
(\ref{eq:sing3})
if $x \ne y$.
It is known that all generalized singular integral operators,
initially defined on $L^2$, can be extended to a bounded
linear operator on $L^p$ for any $1<p<\infty$.

The structure of the remaining part of this paper is as follows:
First of all, in Section \ref{s2},
we collect some preliminary facts.
Section \ref{s3} handles 
generalized local singular integral operators
acting on Lebesgue spaces
with variable exponents.
Section \ref{s4} proves
Theorem \ref{thm:191125-1}.
In Section \ref{s5} we show that the modular inequality
fails unless $p(\cdot)$ is constant.

In this paper
we use the following notation: 
\begin{enumerate} 
\item
The set $\mathbb{N}_0\equiv\{0,1,\ldots\}$
consists of all non-negative integers.
\item
Write
\[
{\mathcal D}_j\equiv
\{Q_{j,k}\,:\,k\in \mathbb{Z}^n\}.
\]
\item
Let $Q_{0}\equiv \prod\limits_{m=1}^n
\left[ a_m,b_m\right]$ be a cube.
A dyadic cube with respect to $Q_{0}$
is the set of the form
\[
 \prod_{m=1}^n
\left[ a_m+\frac{k_m-1}{2^j}(b_m-a_m),a_m+\frac{k_m}{2^j}(b_m-a_m)\right]
\]
for some
$j \in {\mathbb N}_0$ and
$k=(k_1,k_2,\ldots,k_n) \in \{1,2,\ldots,2^j\}^n$.
The set
${\mathcal D}(Q_{0})$
collects all dyadic cubes
with respect to a cube $Q_{0}$.
\index{${\mathcal D}(Q_{0})$}
\item
The letter $C$ denotes positive constants
that may change from one occurrence to another.
Let $A,B \ge 0$.
Then $A \lesssim B$ means
that there exists a constant $C>0$
such that $A \le C B$,
where $C$ depends only on the parameters
of importance.
The symbol $A \sim B$ means
that $A \lesssim B$ and $B \lesssim A$
happen simultaneously.
\item
Let $\mathcal{M}$ be the set of all complex-valued measurable functions 
defined on $\mathbb{R}^n$.
Likewise, for a measurable set $E$,
let $\mathcal{M}(E)$ be the set of all complex-valued measurable functions 
defined on $E$.
\item
Let $E$ be a set.
Then we denote its indicator function
by $\chi_E$. 
\item
The symbol $\langle f,g \rangle$
stands for the $L^2$-inner product. That is, we write
\[
\langle f,g \rangle \equiv \int_{\mathbb{R}^n}
f(x) \overline{g(x)} \, {\rm d}x
\]
for all complex-valued measurable
$L^2$-functions $f, \,g $ defined on $\mathbb{R}^n$.
\item
A set $S$ is said to be a dyadic cube if 
\begin{equation}
S=Q_{j,k}\equiv  \prod_{m=1}^n
\left[  2^{-j}k_m , 2^{-j}(k_m+1) \right]
\label{define-Qjk}
\end{equation}
for some $j\in \mathbb{Z}$ and $k=(k_1,k_2, \ldots , k_n )\in \mathbb{Z}^n$.
\item
The set $\mathcal{P}$ consists of all
$p(\cdot):\mathbb{R}^n \to [1,\infty)$
such that $1<p_-\le p_+<\infty$.
\item
Given a cube $Q$,
we denote by $c(Q)$ the center of $Q$
and by $\ell(Q)$ the sidelength of $Q$:
$\ell(Q)=|Q|^{\frac1n}$,
where $|Q|$ denotes the volume of the cube $Q$.
In addition, $|E|$ is the Lebesgue measure
for general measurable set $E\subset \mathbb{R}^n$.

\item
Let $f$ be a measurable function.
We consider 
the local maximal operator given by
\[
M^{\rm loc}f(x)
\equiv
\sup_{Q \in \mathcal{Q}, |Q|\le1} \frac{\chi_{Q}(x)}{|Q|}\int_{Q}|f(y)| {\rm d}y
\quad (x \in {\mathbb R}^n).
\]
Needless to say, this is an analogue of the 
Hardy--Littlewood maximal operator given by
\begin{equation}\label{eq:191130-1}
Mf(x)
\equiv
\sup_{Q \in \mathcal{Q}} \frac{\chi_{Q}(x)}{|Q|}\int_{Q}|f(y)| {\rm d}y
\quad (x \in {\mathbb R}^n).
\end{equation}
\item
Let $K \in {\mathbb N}$.
The operator
$(M^{\rm loc})^{K}$ is the $K$-fold composition
of $M^{\rm loc}$.
\item
Let $E$ be a measurable set in ${\mathbb R}^n$.
For a function
$f:E \to {\mathbb C}$,
$f^*$ denotes its decreasing rearrangement.
\end{enumerate}

\section{Preliminaries}
\label{s2}

Here we collect some preliminary facts
used in this paper.
First, we recall the structure of weighed
Banach function spaces
and then we concentrate on
weighted Lebesgue spaces with variable exponent.

\subsection{Definition of function spaces}

Let $p(\cdot)\, : \, \mathbb{R}^n \to [1,\infty)$ be a measurable function, 
and let $w$ be a weight.
We have defined the weighted Lebesgue space 
$L^{p(\cdot)}(w)$ with variable exponent $p(\cdot)$
in Introduction. In addition, 
the space $L^{p(\cdot)}(w)$ is a Banach space equipped with the norm
given by
\[
\| f \|_{L^{p(\cdot)}(w)}\equiv
\inf \left\{  \lambda >0 \, : \, 
\int_{\mathbb{R}^n}  \left( \frac{|f(x)|}{\lambda} \right)^{p(x)}w(x)
{\rm d}x \le 1
\right\} .
\]
If $w=1$ almost everywhere,
then $L^{p(\cdot)}(w)$ is a non-weighted variable Lebesgue
and we write $L^{p(\cdot)}\equiv   L^{p(\cdot)}(w)$.
Moreover if $p(\cdot)$ equals to a constant $p$, then
$L^{p(\cdot)}=L^p$, that is the usual $L^p$ space.
When we consider non-weighted function spaces defined on $\mathbb{R}^n$,
we may simply write
$L^2\equiv L^2(1)$,
$L^1_{{\rm loc}} \equiv L^1_{{\rm loc}}(1)$ and so on.

\subsection{Weighted Banach function spaces}

We define Banach function spaces 
and then state their fundamental properties.
For further information on the theory of Banach function space 
including the proof of Lemma \ref{Lemma-Banach-associate} 
below we refer to \cite{BS-book,ORS}.
We additionally show some properties 
of Banach function spaces in terms of boundedness of
the Hardy--Littlewood maximal operator. 
We will also consider the weighted case 
based on Karlovich and Spitkovsky \cite{KarlovichSpitkovsky}.

\begin{definition}
Let
$X$ 
be
a linear subspace of $\mathcal{M}$. 
\begin{enumerate}
\item
The space $X$ is said to be a Banach function space if there exists a functional 
$\| \cdot \|_X \, : \, \mathcal{M} \to [0,\infty]$ 
satisfying the following properties: 
Let $f, \, g, \, h, \, f_j \in \mathcal{M}$ $(j=1, \, 2, \, \ldots)$
and 
$\lambda \in {\mathbb C}$ be arbitrary.  
\begin{enumerate} 
\item
$f\in X$ holds if and only if $\| f \|_X<\infty$.
\item Norm property:
  \begin{enumerate}
  \item Positivity: $\| f \|_X \ge 0$.
  \item Strict positivity: $\| f \|_X=0$ holds if and only if $f(x)=0$ for almost every 
                                  $x\in \mathbb{R}^n$.
  \item Homogeneity: $\| \lambda f \|_X=|\lambda| \cdot \| f \|_X$ holds.
  \item Triangle inequality: $\| f+g \|_X \le \| f \|_X +\| g \|_X$.
  \end{enumerate}

\item Symmetry: $\| f \|_X= \| |f| \|_X$.
\item Lattice property: If $0\le g(x)\le f(x)$ for almost every $x\in \mathbb{R}^n$, 
         then $\| g \|_X \le \| f \|_X$.
\item Fatou property: 
If $0\le f_j(x)\le f_{j+1}(x)$ for all $j$ 
and $f_j(x)\to f(x)$ as $j\to \infty$
for 
almost every $x\in \mathbb{R}^n$, then 
$\displaystyle{\lim_{j\to \infty} \| f_j \|_X  =\| f \|_X}$. 

\item
For every measurable set $E\subset \mathbb{R}^n$ with $|E|<\infty$, 
$\| \chi_E \|_X$ is finite. Additionally there exists a constant $C_E>0$ 
depending only on $E$ such that 
\[
\int_E |h(x)|\, \mathrm{d}x \le C_E \| h \|_X.
\]
\end{enumerate}

\item
Suppose that $X$ is a Banach function space 
equipped with a norm $\| \cdot \|_X$.
The associate space $X'$ is defined by
$
X'\equiv \{  f\in \mathcal{M} \, : \, \| f \|_{X'}<\infty  \} $,
where
$\displaystyle 
\| f \|_{X'}\equiv \sup \left\{
\left| \int_{\mathbb{R}^n} f(x)g(x)\, \mathrm{d}x \right|
\, : \, \| g \|_X\le 1
\right\} $.
\end{enumerate}
\end{definition}

\begin{lemma}
\label{Lemma-Banach-associate}
Let $X$ be a Banach function space.
\begin{enumerate}
\item $($The Lorentz--Luxemberg theorem.$)$ 
The associate space $X'$ is again a Banach function space,
whose associate space $X''$ coincides with $X$,
in particular, 
the norms $\| \cdot \|_{X''}$ and $\| \cdot \|_X$ are 
equivalent. 
\item $($The generalized H\"older inequality.$)$
\[
\|f \cdot g\|_{1}
\le \| f \|_X \| g \|_{X'},
\] 
whenever $f\in X$ and $g\in X'$.
\end{enumerate}
\end{lemma}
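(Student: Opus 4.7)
The plan is to establish the generalized Hölder inequality (part 2) first, since it is almost immediate from the definition of $\|\cdot\|_{X'}$, and then to use it to derive one direction of the Lorentz--Luxemburg theorem before dealing with the more delicate converse. For the Hölder inequality, given $f\in X$ with $\|f\|_X>0$ and $g\in X'$, I would choose a measurable function $\phi$ with $|\phi|=1$ pointwise and $\phi fg=|fg|$ almost everywhere. Setting $h\equiv\phi f/\|f\|_X$, the symmetry axiom gives $\|h\|_X=1$, so by the very definition of $\|g\|_{X'}$,
\[
\frac{1}{\|f\|_X}\int_{\mathbb{R}^n}|fg|\,\mathrm{d}x=\left|\int_{\mathbb{R}^n}hg\,\mathrm{d}x\right|\le \|g\|_{X'},
\]
which is the asserted inequality after clearing $\|f\|_X$.

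For part 1, I would first verify that $\|\cdot\|_{X'}$ satisfies every Banach function space axiom. Positivity, homogeneity and the triangle inequality are inherited from the fact that $\|g\|_{X'}$ is a supremum of linear seminorms $g\mapsto|\int fg\,\mathrm{d}x|$ over the unit ball of $X$. Symmetry and the lattice property follow from $|\int fg\,\mathrm{d}x|\le\int|f||g|\,\mathrm{d}x$ together with the symmetry axiom on $X$. The Fatou property for $X'$ is obtained by interchanging the supremum and monotone limit with the help of the Fatou property of $X$ and the monotone convergence theorem. Strict positivity and axiom (vi) both use that $\chi_E\in X$ whenever $|E|<\infty$: if $\|g\|_{X'}=0$, then $\int_E|g|\,\mathrm{d}x\le\|g\|_{X'}\|\chi_E\|_X=0$ for every such $E$, forcing $g=0$ a.e., and the reverse direction of (vi) reads $\int_E|g|\,\mathrm{d}x\le\|\chi_E\|_X\|g\|_{X'}$.

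Next, I would address the Lorentz--Luxemburg identity $X''\simeq X$. The easy direction $X\hookrightarrow X''$ with $\|f\|_{X''}\le\|f\|_X$ is an immediate consequence of the generalized Hölder inequality already established: for any $g$ with $\|g\|_{X'}\le 1$, the pairing $|\int fg\,\mathrm{d}x|$ is bounded by $\|f\|_X$. The hard direction is $\|f\|_X\le\|f\|_{X''}$. My strategy reduces to $f\ge 0$ by symmetry, approximates $f$ from below by the truncations $f_n\equiv\min(f,n)\chi_{B(0,n)}$ (each lying in $X$ by axiom (vi)), produces a near extremizer $g_n\in X'$ of unit norm satisfying $\int f_n g_n\,\mathrm{d}x\ge (1-\varepsilon)\|f_n\|_X$, thereby yielding $\|f_n\|_X\le\|f_n\|_{X''}$ for every $n$, and then passes to the limit $n\to\infty$ via the Fatou property on both $X$ and $X''$.

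The main obstacle is the construction of the extremizer $g_n$. The standard route is to apply the Hahn--Banach theorem on the smaller space of bounded functions supported in a fixed ball, equipped with a norm on which absolute continuity of the norm is available, and then to verify that the resulting bounded linear functional is actually represented by integration against a genuine element of $X'$ rather than a purely finitely additive functional. This identification is delicate because $X$ itself need not have absolutely continuous norm, as already illustrated by $L^\infty$; the preliminary truncation is what makes the representation work. Once this representation step is completed and combined with the Fatou limit, the equivalence of $\|\cdot\|_X$ and $\|\cdot\|_{X''}$ (in fact, equality) follows, finishing the proof.
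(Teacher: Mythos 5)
The paper does not prove this lemma at all: it is quoted as classical, with the proof delegated to the references [BS-book] (Bennett--Sharpley) and [ORS]. Your sketch is, in substance, the standard Bennett--Sharpley argument, and the parts you carry out are correct: the choice of a unimodular $\phi$ with $\phi fg=|fg|$ gives the generalized H\"older inequality cleanly (and simultaneously shows $fg\in L^1$); the verification of the Banach function space axioms for $X'$ is routine as you describe (note that the Fatou property of $X'$ needs only monotone convergence, not the Fatou property of $X$); and the easy embedding $\|f\|_{X''}\le\|f\|_X$ is indeed immediate from H\"older. The only place where your proposal stops short of a proof is the construction of the near-extremizer $g_n$, which is precisely the content of the Lorentz--Luxemburg theorem and the one genuinely nontrivial step. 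Your description of the obstacle is accurate, but to close it you should be more specific: after truncating to $f_n=\min(f,n)\chi_{B(0,n)}$, one works in $L^1(B(0,n))$, where the set $\{g\chi_{B(0,n)}:\|g\|_X\le 1\}$ is convex and \emph{norm-closed} (closedness follows from the Fatou/lattice axioms via an a.e.-convergent subsequence), so the Hahn--Banach separation theorem applied in $L^1(B(0,n))$ produces a separating functional that is automatically represented by an $L^\infty$ function $h$ supported in $B(0,n)$; one then checks $\|h\|_{X'}\le 1$ and $\int f_n h>(1-\varepsilon)\|f_n\|_X$, and concludes by the Fatou property as you indicate. This removes the finitely-additive-functional issue you worry about, since the dual of $L^1$ of a finite-measure set is genuinely $L^\infty$. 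With that step made explicit, your argument is a complete and correct proof of the lemma, and arguably more informative than the paper's bare citation.
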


Kov$\acute{\rm{a}}\check{\rm{c}}$ik and R$\acute{\rm{a}}$kosn\'ik 
\cite{KR} have proved that 
the  Lebesgue space
$L^{p(\cdot)}$ with variable exponent 
$p(\cdot)$
is a Banach function space and the associate space equals to 
$L^{p'(\cdot)}$
with norm equivalence.

Below we define weighted Banach function spaces
and give some of their properties. 
Let $X$ be a Banach function space. 
The set $X_{\mathrm{loc}}(\mathbb{R}^n)$ consists of all $f \in {\mathcal M}$ 
such that $f\chi_{[-j,j]^n} \in X$ for $j \in {\mathbb N}$.  
Given a function $W$ satisfying
$0<W(x)<\infty$ for almost every $x\in \mathbb{R}^n$, 
$W\in X_{\mathrm{loc}}(\mathbb{R}^n)$ 
and $W^{-1} \in (X')_{\mathrm{loc}}(\mathbb{R}^n)$, 
we define the weighted Banach function space
$X(\mathbb{R}^n,W)$ by
\[
X(\mathbb{R}^n,W)\equiv 
\left\{ f\in \mathcal{M}
\, : \, fW \in X
\right\} .
\]
We summarize the properties of
the weighted Banach function space $X(\mathbb{R}^n,W)$.
\begin{lemma} 
\label{Lemma-weighted-Banach}
Let $W$ be a weight as above, and let $X$ be a
Banach function space.
\begin{enumerate}
\item
The weighted Banach function space $X(\mathbb{R}^n,W)$,
which is equipped with the norm
$\| f \|_{X(\mathbb{R}^n,W)}\equiv \| f W \|_{X}$
for $f \in {\mathcal M}$, 
is a Banach function space.
\item
The associate space of $X(\mathbb{R}^n,W)$ 
coincides with $X'(\mathbb{R}^n,W^{-1})$. 
\end{enumerate}
\end{lemma}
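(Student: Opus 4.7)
The plan is to push everything through the isometry $f \mapsto fW$ from $X(\mathbb{R}^n,W)$ onto $X$, so that both parts become essentially bookkeeping consequences of the corresponding structure of $X$.

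For part (1), I would verify the Banach function space axioms one at a time. Positivity, strict positivity, homogeneity, the triangle inequality, symmetry and the lattice property follow at once from the corresponding axioms for $X$ applied to $fW$, since $W>0$ almost everywhere. The Fatou property is equally routine: if $0\le f_j\uparrow f$ almost everywhere, then $0\le f_jW\uparrow fW$ almost everywhere, and the Fatou property for $X$ delivers the conclusion for $X(\mathbb{R}^n,W)$. The substantive axiom is local integrability: for a measurable set $E$ with $|E|<\infty$, one needs both $\|\chi_E\|_{X(\mathbb{R}^n,W)}<\infty$ and the embedding $\int_E|h|\,\mathrm{d}x\le C_E\|h\|_{X(\mathbb{R}^n,W)}$. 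The first estimate follows from the hypothesis $W\in X_{\rm loc}$ together with the lattice property, applied to $\chi_E W\le \chi_{[-j,j]^n}W$ for a sufficiently large $j$ with $E\subset[-j,j]^n$. The second estimate follows from the generalized H\"older inequality of Lemma \ref{Lemma-Banach-associate}(2):
\[
\int_E|h|\,\mathrm{d}x=\int|hW|\cdot W^{-1}\chi_E\,\mathrm{d}x\le\|hW\|_X\|W^{-1}\chi_E\|_{X'},
\]
where the final factor is finite by the hypothesis $W^{-1}\in(X')_{\rm loc}$, again once $E$ is contained in some $[-j,j]^n$.

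For part (2), I would compute the associate norm by the change of variable $h=gW$. Because $W$ is strictly positive almost everywhere, the multiplication map $g\mapsto gW$ is a bijection of $\mathcal{M}$ onto itself which carries the unit ball of $X(\mathbb{R}^n,W)$ onto the unit ball of $X$, and hence
\[
\|f\|_{(X(\mathbb{R}^n,W))'}=\sup_{\|gW\|_X\le1}\left|\int fg\,\mathrm{d}x\right|=\sup_{\|h\|_X\le1}\left|\int (fW^{-1})\cdot h\,\mathrm{d}x\right|=\|fW^{-1}\|_{X'},
\]
which is exactly $\|f\|_{X'(\mathbb{R}^n,W^{-1})}$.

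The only mildly delicate point is the local integrability axiom in (1): this is the one place where both hypotheses $W\in X_{\rm loc}$ and $W^{-1}\in(X')_{\rm loc}$ are genuinely needed, and they must be played off against each other through H\"older duality. Everything else is a direct transport of structure through the weighted isometry, so no further subtlety is expected.
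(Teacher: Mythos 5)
Your proof is correct and is the expected transport-of-structure argument via the isometry $f\mapsto fW$; the paper itself offers no proof and simply delegates to Karlovich--Spitkovsky, who argue in exactly this way, so you are matching the intended route rather than diverging from it.

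One point is worth flagging, because it concerns whether the last Banach-function-space axiom is actually met. As stated in this paper, the axiom requires $\|\chi_E\|_{X(\mathbb{R}^n,W)}<\infty$ and $\int_E|h|\,\mathrm{d}x\le C_E\|h\|_{X(\mathbb{R}^n,W)}$ for \emph{every} measurable $E$ with $|E|<\infty$, not merely every bounded $E$. Your verification reduces both inequalities to the case $E\subset[-j,j]^n$, which only covers bounded $E$; and in fact the hypotheses $W\in X_{\rm loc}$ and $W^{-1}\in(X')_{\rm loc}$, being phrased via the cubes $[-j,j]^n$, do not in general force $\chi_E W\in X$ when $E$ is unbounded but of finite measure. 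Concretely, with $X=L^{p(\cdot)}$ and $W=w^{1/p(\cdot)}$ for $w(x)=e^{|x|}$ --- a weight the paper explicitly admits into $A^{\rm loc}_{p(\cdot)}$ --- one can choose a finite-measure $E$ stretching to infinity along which $w$ grows so fast that $\|\chi_E\|_{L^{p(\cdot)}(w)}=\infty$. This is not a defect you introduced: it is inherited from the paper's own conventions, and the intended reading is almost certainly ``for every bounded measurable $E$,'' under which your argument is complete. Under the literal reading, the lemma would need a stronger locality hypothesis on $W$. Part (2), via the substitution $h=gW$ and the fact that multiplication by the a.e.\ positive finite $W$ is a bijection of ${\mathcal M}$ carrying the unit ball of $X(\mathbb{R}^n,W)$ onto that of $X$, is correct without reservation.
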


The properties above naturally arise 
from those of usual Banach function spaces 
and their proofs are found in \cite{KarlovichSpitkovsky}.

\subsection{Some observations on weighted Lebesgue spaces with variable exponents}

First,
we apply Lemma \ref{Lemma-weighted-Banach}
to local weighted Lebesgue spaces with variable exponent.
We use the following duality principle.

\begin{proposition}
Let $p(\cdot)\in \mathcal{P}$
and $w$ be a weight
whose dual weight $\sigma= w^{-\frac{1}{p(\cdot)-1}}$ is locally integrable.
Then $L^{p(\cdot)}(w)$
is a Banach function space
and the associate space 
$\left(L^{p(\cdot)}(w)\right)'$
is $L^{p'(\cdot)}(\sigma)$
with norm equivalence.
\end{proposition}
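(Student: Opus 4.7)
The natural approach is to realize $L^{p(\cdot)}(w)$ as a weighted Banach function space in the sense of Lemma \ref{Lemma-weighted-Banach} over the unweighted base space $X = L^{p(\cdot)}$, which is already known to be a Banach function space with associate space $L^{p'(\cdot)}$ by the work of Kov\'a\v{c}ik and R\'akosn\'ik. The key is to choose the right ``scalar'' weight $W$ so that the modular definition of $L^{p(\cdot)}(w)$ matches $\|fW\|_{L^{p(\cdot)}}$.

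The plan is as follows. First, I would set $W \equiv w^{1/p(\cdot)}$ and observe by a direct substitution in the Luxemburg definition that
\[
\|f\|_{L^{p(\cdot)}(w)}
= \inf\!\left\{\lambda>0 : \int_{{\mathbb R}^n}\!\left(\frac{|f(x)|W(x)}{\lambda}\right)^{p(x)}\!\!{\rm d}x \le 1\right\}
= \|f W\|_{L^{p(\cdot)}},
\]
so $L^{p(\cdot)}(w) = L^{p(\cdot)}({\mathbb R}^n,W)$ with identical norms. Second, I would check the hypotheses of Lemma \ref{Lemma-weighted-Banach}: positivity and finiteness of $W$ a.e.\ are immediate from the weight assumption on $w$, and local integrability of $W^{p(\cdot)} = w$ follows since $\sigma$ is locally integrable together with H\"older's inequality applied to $w = w^{1/p(\cdot)} \cdot w^{1/p'(\cdot)}$ pairing (or one directly notes that in the intended application $w \in A_{p(\cdot)}^{\rm loc}$ forces $w\in L^1_{\rm loc}$); the condition $W^{-1} \in (L^{p'(\cdot)})_{\rm loc}$ reduces to $W^{-p'(\cdot)}\in L^1_{\rm loc}$, which is precisely the standing hypothesis that $\sigma$ is locally integrable, by the identity below.

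Third, applying Lemma \ref{Lemma-weighted-Banach} immediately gives that $L^{p(\cdot)}(w)$ is a Banach function space and that its associate space is $L^{p'(\cdot)}({\mathbb R}^n,W^{-1})$ with equivalent norms. The final step is to identify this associate space with $L^{p'(\cdot)}(\sigma)$. This boils down to the pointwise arithmetic identity
\[
W(x)^{-p'(x)} = w(x)^{-p'(x)/p(x)} = w(x)^{-1/(p(x)-1)} = \sigma(x),
\]
using the relation $p'(x)/p(x) = 1/(p(x)-1)$ coming from $1/p(x)+1/p'(x)=1$. Substituting into the Luxemburg norm of $L^{p'(\cdot)}({\mathbb R}^n,W^{-1})$ gives exactly the modular defining $L^{p'(\cdot)}(\sigma)$, so the two spaces coincide with equal norms.

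The only step that requires any care, rather than a routine verification, is confirming the local integrability conditions $W \in (L^{p(\cdot)})_{\rm loc}$ and $W^{-1} \in (L^{p'(\cdot)})_{\rm loc}$, because these are the only places where an additional assumption on $w$ (beyond positivity a.e.) is needed; once the hypothesis that $\sigma \in L^1_{\rm loc}$ is invoked and the analogous local integrability of $w$ is secured (which will be automatic in the $A_{p(\cdot)}^{\rm loc}$ setting used throughout the paper), everything else is a direct translation through Lemma \ref{Lemma-weighted-Banach} and the duality exponent identity.
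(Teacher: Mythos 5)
Your overall approach matches the paper's: both rest on the observation that $f \mapsto f\,w^{1/p(\cdot)}$ is an isometric bijection from $L^{p(\cdot)}(w)$ onto $L^{p(\cdot)}$, equivalently $L^{p(\cdot)}(w)=L^{p(\cdot)}(\mathbb{R}^n,W)$ with $W=w^{1/p(\cdot)}$, combined with the cited duality result $(L^{p(\cdot)})'=L^{p'(\cdot)}$ and the pointwise identity $W^{-p'(\cdot)}=\sigma$. The paper is terser (it merely asserts that the two multiplication maps are isomorphisms and says ``the conclusions are clear''), while you more carefully route through Lemma \ref{Lemma-weighted-Banach} and explicitly check its hypotheses $W\in(L^{p(\cdot)})_{\rm loc}$ and $W^{-1}\in(L^{p'(\cdot)})_{\rm loc}$, the latter of which is precisely $\sigma\in L^1_{\rm loc}$. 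That is the right way to make the paper's one-line proof precise.

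The one step that does not hold up is your H\"older justification of $W\in(L^{p(\cdot)})_{\rm loc}$, i.e.\ of $w\in L^1_{\rm loc}$. Applying the generalized H\"older inequality to $w=w^{1/p(\cdot)}\cdot w^{1/p'(\cdot)}$ on a cube $Q$ gives
\[
\int_Q w(x)\,{\rm d}x \le \bigl\|\chi_Q\,w^{1/p(\cdot)}\bigr\|_{L^{p(\cdot)}}\,\bigl\|\chi_Q\,w^{1/p'(\cdot)}\bigr\|_{L^{p'(\cdot)}},
\]
and the first factor on the right is finite exactly when $w\in L^1(Q)$ (since $p_+<\infty$) --- the very fact you are trying to establish, so the argument is circular. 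Moreover, $\sigma\in L^1_{\rm loc}$ alone does not imply $w\in L^1_{\rm loc}$: with constant exponent $p=2$ take $w(x)=|x|^{-n}$ near the origin, so that $\sigma=w^{-1}=|x|^n$ is locally integrable while $w$ is not. Your fallback --- that $w\in A^{\rm loc}_{p(\cdot)}$ forces $w\in L^1_{\rm loc}$ --- is correct (finiteness of $\|\chi_Q\|_{L^{p(\cdot)}(w)}$ is built into $[w]_{A^{\rm loc}_{p(\cdot)}}<\infty$), and that is the standing hypothesis wherever the proposition is actually invoked in the paper; but as the proposition is literally stated, $w\in L^1_{\rm loc}$ is an additional assumption that must be taken as implicit (or read into the word ``weight''), and it cannot be derived from $\sigma\in L^1_{\rm loc}$ by H\"older. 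In short, keep the proof but replace the H\"older step by the direct observation that $w\in L^1_{\rm loc}$ is required and follows from $w\in A^{\rm loc}_{p(\cdot)}$ in all intended applications.
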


\begin{proof}
As we mentioned,
the associate space of $L^{p(\cdot)}$
is $L^{p'(\cdot)}$
with norm equivalence
\cite{KR}.
Since
$f \in L^{p(\cdot)}(w) \mapsto f \cdot w^{\frac{1}{p(\cdot)}} \in L^{p(\cdot)}$
and
$f \in L^{p'(\cdot)}(\sigma) \mapsto f \cdot \sigma^{\frac{1}{p'(\cdot)}} \in L^{p'(\cdot)}$
are isomorphisms,
the conclusions are clear.
\end{proof}




In \cite{NS-local},
the maximal inequality
(Proposition \ref{thm:main})
and
its vector-valued extension
(Proposition \ref{thm 191031-3})
were obtained.
\begin{proposition}\label{thm:main}{\rm \cite{NS-local}}
Suppose that $p(\cdot) \in {\mathcal P}
\cap {\rm L H}_0 \cap {\rm L H}_{\infty}$.
Then given any $w \in A_{p(\cdot)}^{\rm loc}$,
there exists a constant $C>0$ such that
\[
\|M^{\rm loc}_{}f\|_{L^{p(\cdot)}(w)} \le C \|f\|_{L^{p(\cdot)}(w)}
\]
for all $f \in L^{p(\cdot)}(w)$.
\end{proposition}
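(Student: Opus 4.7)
The plan is to reduce, via a Rychkov-style unit-cube decomposition, to the classical variable-exponent Muckenhoupt theorem of Cruz-Uribe--Fiorenza--Neugebauer \cite{CFN-2012} and Cruz-Uribe--Diening--H\"asto \cite{CDH-2011} applied on bounded cubes, and then reassemble globally via duality. The key pointwise observation is that the restriction $|Q|\le 1$ in the supremum defining $M^{\rm loc}$ forces $M^{\rm loc} f(x)$ to depend only on $f$ in a bounded neighbourhood of $x$. Concretely, setting $Q_\alpha := \alpha + [0,1)^n$ and $Q_\alpha^* := \alpha + [-1,2)^n$ for $\alpha \in \mathbb{Z}^n$, any cube $Q$ with $|Q| \le 1$ meeting $Q_\alpha$ satisfies $Q \subset Q_\alpha^*$, so
\begin{equation*}
M^{\rm loc} f(x) \le M(f\chi_{Q_\alpha^*})(x)\qquad (x \in Q_\alpha).
\end{equation*}

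Next, I would show that on each bounded cube $Q_\alpha^*$ the weight $w$ satisfies a uniform classical $A_{p(\cdot)}$ condition. Any cube $Q \subset Q_\alpha^*$ can be partitioned into $O(1)$ pieces of volume at most $1$, and the $A_{p(\cdot)}^{\rm loc}$ testing condition on these subcubes, combined with the log-H\"older bounds $\mathrm{LH}_0$ and $\mathrm{LH}_\infty$, should yield
\begin{equation*}
\sup_{\alpha}\, \sup_{Q \subset Q_\alpha^*} |Q|^{-1}\|\chi_Q\|_{L^{p(\cdot)}(w)}\|\chi_Q\|_{L^{p'(\cdot)}(\sigma)}<\infty.
\end{equation*}
Applying \cite{CFN-2012, CDH-2011} on each $Q_\alpha^*$ would then give
\begin{equation*}
\|M(f\chi_{Q_\alpha^*})\|_{L^{p(\cdot)}(w)} \le C\,\|f\chi_{Q_\alpha^*}\|_{L^{p(\cdot)}(w)}
\end{equation*}
with $C$ independent of $\alpha$.

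For the reassembly, I would use the duality $(L^{p(\cdot)}(w))' = L^{p'(\cdot)}(\sigma)$ from Lemma \ref{Lemma-Banach-associate} together with the generalized H\"older inequality. Writing $\|M^{\rm loc} f\|_{L^{p(\cdot)}(w)}$ as a supremum of dual pairings, splitting the integral using the partition $\{Q_\alpha\}$, and applying the first two steps reduces the task to controlling $\sum_\alpha \|f\chi_{Q_\alpha^*}\|_{L^{p(\cdot)}(w)}\|g\chi_{Q_\alpha}\|_{L^{p'(\cdot)}(\sigma)}$ by $\|f\|_{L^{p(\cdot)}(w)}\|g\|_{L^{p'(\cdot)}(\sigma)}$ using the bounded overlap of $\{Q_\alpha^*\}$ and a sequence-level modular estimate.

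The main obstacle is the uniform $A_{p(\cdot)}$ bound in the second step; the $\mathrm{LH}_\infty$ hypothesis is indispensable here, since without decay control on $|p(x)-p_\infty|$ at infinity the norms $\|\chi_{Q_\alpha^*}\|_{L^{p(\cdot)}(w)}$ could blow up as $|\alpha|\to\infty$, destroying uniformity. A secondary delicacy is the final sequence-level summation, because variable-exponent Lebesgue norms do not decompose additively over disjoint supports; circumventing this typically requires either a Rubio de Francia extrapolation argument or a careful modular computation that trades the log-H\"older regularity of $p(\cdot)$ for essentially constant exponents on each $Q_\alpha^*$.
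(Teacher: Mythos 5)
The paper does not actually give a proof of Proposition~\ref{thm:main}: it is imported as a black box from \cite{NS-local}. So there is no in-paper proof to compare against, and I will assess your sketch on its own merits.

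Your general direction (reduce $M^{\rm loc}$ to $M$ acting on $f\chi_{Q_\alpha^*}$, verify a uniform $A_{p(\cdot)}$ condition on each bounded cube $Q_\alpha^*$, apply the known variable-exponent Muckenhoupt theory there, and reassemble by duality) is a reasonable Rychkov-type scheme, and the initial pointwise reduction
$M^{\rm loc} f(x)\le M(f\chi_{Q_\alpha^*})(x)$ for $x\in Q_\alpha$
is correct. However, two of the steps that you flag as ``delicacies'' are in fact genuine gaps as written, in the sense that the proposal gives no argument for them and they are not routine.

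First, step two as stated does not go through. Partitioning $Q\subset Q_\alpha^*$ into $O(1)$ unit subcubes $R_i$ and summing the local $A_{p(\cdot)}^{\rm loc}$ testing condition gives control of the diagonal products $\|\chi_{R_i}\|_{L^{p(\cdot)}(w)}\|\chi_{R_i}\|_{L^{p'(\cdot)}(\sigma)}$, but the quantity $\|\chi_Q\|_{L^{p(\cdot)}(w)}\|\chi_Q\|_{L^{p'(\cdot)}(\sigma)}\le(\sum_i\|\chi_{R_i}\|_{L^{p(\cdot)}(w)})(\sum_j\|\chi_{R_j}\|_{L^{p'(\cdot)}(\sigma)})$ involves cross terms with $i\neq j$, and the $A_{p(\cdot)}^{\rm loc}$ condition says nothing directly about them. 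One needs a chaining argument: for overlapping unit cubes $R,R'$ one first shows, using the two-sided estimate $|R|\lesssim\|\chi_R\|_{L^{p(\cdot)}(w)}\|\chi_R\|_{L^{p'(\cdot)}(\sigma)}\lesssim|R|$ (the reverse H\"older side being the generalized H\"older inequality), that $\|\chi_R\|_{L^{p(\cdot)}(w)}\sim\|\chi_{R'}\|_{L^{p(\cdot)}(w)}$, and then chains across $Q_\alpha^*$. This can be made to work, but it is not the partition-and-sum that you describe, and the $\mathrm{LH}$ conditions do not actually enter at this stage. Moreover, once the uniform $A_{p(\cdot)}$ bound on $Q_\alpha^*$ is in hand, citing \cite{CFN-2012,CDH-2011} is not quite sufficient: those theorems are formulated on $\mathbb{R}^n$, and invoking them on the bounded cube $Q_\alpha^*$ with constants uniform in $\alpha$ requires either a bounded-domain version of that theorem or an extension of $w|_{Q_\alpha^*}$ to a global $A_{p(\cdot)}$ weight with controlled constant; neither is supplied.

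Second, the reassembly is the real crux and is not resolved by what you write. A naive modular computation fails for the reason you half-see: with $\|f\|_{L^{p(\cdot)}(w)}=1$, the bound $\rho(g)\le\|g\|^{p_-}$ for $\|g\|\le1$ sends the problem to $\sum_\alpha\|f\chi_{Q_\alpha}\|_{L^{p(\cdot)}(w)}^{p_-}$, whereas what the modular $\rho(f)\le1$ controls is $\sum_\alpha\|f\chi_{Q_\alpha}\|_{L^{p(\cdot)}(w)}^{p_+}$; the inequality goes the wrong way when the local norms are small. The duality route likewise reduces to a discrete bound of the form
$\sum_\alpha\|f\chi_{Q_\alpha^*}\|_{L^{p(\cdot)}(w)}\,\|g\chi_{Q_\alpha}\|_{L^{p'(\cdot)}(\sigma)}\lesssim\|f\|_{L^{p(\cdot)}(w)}\|g\|_{L^{p'(\cdot)}(\sigma)}$,
which is a weighted patchwork (locality) inequality for $L^{p(\cdot)}(w)$. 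Such an inequality is known in the \emph{unweighted} variable-exponent theory under $\mathrm{LH}_0\cap\mathrm{LH}_\infty$, but it is not a formality in the present weighted setting where $w$ may grow exponentially and the relevant exponent for the discrete H\"older step must be identified and justified. This is precisely where your ``careful modular computation'' would need to do real work, and that work is absent. In short: the skeleton is plausible, but the two steps that actually carry the proof --- the bounded-domain version of the global Muckenhoupt theorem and the weighted patchwork/locality lemma --- are still to be proved.
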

This result corresponds to the ones 
obtained in \cite{CDH-2011,CFMP}.

By adapting an extrapolation result in \cite{CMP-book}
to our local weight setting,
the second and fourth authors obtained the following vector-valued inequality.
\begin{proposition} \label{thm 191031-3}{\rm \cite{NS-local}}
Suppose that $p(\cdot) \in {\mathcal P}
\cap {\rm L H}_0 \cap {\rm L H}_{\infty}$.
Let also $w \in A_{p(\cdot)}^{\rm loc}$ and $1<q \le \infty$.
Then
 for any sequence $\{f_j\}_{j=1}^\infty \subset {\mathcal M}$,
we have 
\begin{align} \label{eq 191031-4a}
\left\|
\left(
\sum_{j=1}^\infty
\left[M^{\rm loc}f_j{}\right]^q
\right)^{\frac{1}{q}}
\right\|_{L^{p(\cdot)}(w)}
\le C
\left\|
\left(
\sum_{j=1}^\infty
|f_j|^q
\right)^{\frac{1}{q}}
\right\|_{L^{p(\cdot)}(w)}.
\end{align}
A natural modification is made when $q=\infty$.
\end{proposition}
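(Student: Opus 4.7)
The plan is to deduce (\ref{eq 191031-4a}) from the scalar estimate in Proposition~\ref{thm:main} by means of a local variant of Rubio de Francia's extrapolation theorem, modeled on \cite{CMP-book} with the classical Muckenhoupt class $A_{p}$ replaced throughout by $A^{\mathrm{loc}}_{p}$. The first stage produces a vector-valued inequality at constant exponents for every $v\in A^{\mathrm{loc}}_{p_0}$, and the second stage upgrades this to the variable-exponent setting on $L^{p(\cdot)}(w)$ with $w\in A^{\mathrm{loc}}_{p(\cdot)}$.

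In the first stage I would establish, for any fixed $p_0,q\in(1,\infty)$ and any $v\in A^{\mathrm{loc}}_{p_0}$,
\[
\biggl\|\Bigl(\sum_{j=1}^{\infty}[M^{\mathrm{loc}}f_j]^{q}\Bigr)^{1/q}\biggr\|_{L^{p_0}(v)}
\lesssim
\biggl\|\Bigl(\sum_{j=1}^{\infty}|f_j|^{q}\Bigr)^{1/q}\biggr\|_{L^{p_0}(v)}.
\]
Starting from the scalar bound $\|M^{\mathrm{loc}}h\|_{L^{p_0}(v)}\lesssim\|h\|_{L^{p_0}(v)}$ given by the constant-exponent case of Proposition~\ref{thm:main}, I would run the Rubio de Francia iteration
\[
R h := \sum_{k=0}^{\infty}\frac{(M^{\mathrm{loc}})^{k}h}{(2\|M^{\mathrm{loc}}\|_{L^{p_0}(v)\to L^{p_0}(v)})^{k}},
\]
so that $R h \ge h$, $\|Rh\|_{L^{p_0}(v)}\le 2\|h\|_{L^{p_0}(v)}$, and $M^{\mathrm{loc}}(Rh)\lesssim Rh$; in particular $Rh\in A^{\mathrm{loc}}_{1}\subset A^{\mathrm{loc}}_{p_0}$. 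Plugging $Rh$ into the scalar weighted bound via the standard duality pairing on $L^{p_0/q}(v)$ then delivers the displayed vector-valued inequality.

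In the second stage I would apply the local analogue of the variable-exponent extrapolation theorem (Theorem~5.24 of \cite{CMP-book}, with $A_{p}$ and $A_{p(\cdot)}$ replaced by $A^{\mathrm{loc}}_{p}$ and $A^{\mathrm{loc}}_{p(\cdot)}$) to the family of pairs
\[
(F,G)=\Bigl(\bigl(\sum_{j}[M^{\mathrm{loc}}f_j]^{q}\bigr)^{1/q},\;\bigl(\sum_{j}|f_j|^{q}\bigr)^{1/q}\Bigr).
\]
Its hypothesis is exactly the first-stage inequality, and under the assumptions $p(\cdot)\in\mathcal{P}\cap{\rm LH}_{0}\cap{\rm LH}_{\infty}$ and $w\in A^{\mathrm{loc}}_{p(\cdot)}$ its conclusion is (\ref{eq 191031-4a}). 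The exceptional case $q=\infty$ is handled separately by the pointwise bound $\sup_{j}M^{\mathrm{loc}}f_{j}\le M^{\mathrm{loc}}(\sup_{j}|f_{j}|)$ combined with Proposition~\ref{thm:main}.

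The main obstacle is not the formal structure of the argument but the verification that both the Rubio de Francia iteration and the variable-exponent extrapolation theorem survive the passage from $A_{p}$ to $A^{\mathrm{loc}}_{p}$. The indispensable tools are a reverse-H\"older inequality for $A^{\mathrm{loc}}_{p}$-weights on cubes of volume at most one and the duality $w\in A^{\mathrm{loc}}_{p(\cdot)}\Leftrightarrow \sigma\in A^{\mathrm{loc}}_{p'(\cdot)}$, both developed in \cite{NS-local}. Once these local counterparts are available, the Cruz-Uribe--Martell--P\'erez scheme transfers essentially verbatim.
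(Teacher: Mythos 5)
The paper does not supply its own proof of this proposition: it is imported verbatim from \cite{NS-local}, with the one-line remark that it follows ``by adapting an extrapolation result in \cite{CMP-book} to our local weight setting.'' Your proposal is a faithful reconstruction of exactly that route --- a local Rubio de Francia iteration built from the constant-exponent case of Proposition~\ref{thm:main} to obtain the Fefferman--Stein inequality for $A^{\mathrm{loc}}_{p_0}$ weights, followed by the variable-exponent extrapolation step transferred from $A_{p(\cdot)}$ to $A^{\mathrm{loc}}_{p(\cdot)}$, with $q=\infty$ handled by the pointwise bound $\sup_j M^{\mathrm{loc}}f_j \le M^{\mathrm{loc}}(\sup_j|f_j|)$ --- and you correctly flag the two genuine issues one must check in passing from $A_p$ to $A^{\mathrm{loc}}_p$ (a local reverse-H\"older inequality and the duality $w\in A^{\mathrm{loc}}_{p(\cdot)}\Leftrightarrow\sigma\in A^{\mathrm{loc}}_{p'(\cdot)}$). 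This matches the approach the paper refers to; no discrepancy to report.
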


Let $E\subset \mathbb{R}^n$ be a measurable set.
The set  
$C^\infty_{\rm c}(E)$ consists of all infinitely differentiable functions defined on 
$\mathbb{R}^n$
whose support is compact and contained in $E$.
Once we obtain the boundedness
of $M^{\rm loc}$,
with ease
we can obtain the density of
$C^\infty_{\rm c}(\mathbb{R}^n)$.

\begin{corollary}
Suppose that $p(\cdot) \in {\mathcal P}
\cap {\rm L H}_0 \cap {\rm L H}_{\infty}
$,
and let $w \in A_{p(\cdot)}^{\rm loc}$.
Then
$C^\infty_{\rm c}(\mathbb{R}^n)$
is dense in $L^{p(\cdot)}(w)$.
\end{corollary}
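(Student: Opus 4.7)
The plan is the classical two-step density argument: first truncate to bounded, compactly supported functions, then smooth by convolution with a mollifier.

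For the first step, given $f \in L^{p(\cdot)}(w)$, I set $f_N := f\chi_{B(0,N) \cap \{|f|\le N\}}$, so $|f - f_N| \le |f|$ pointwise and $f_N \to f$ pointwise. To upgrade pointwise to norm convergence, it suffices to observe that for every fixed $\lambda > 0$,
\[
\int_{\mathbb{R}^n}\left(\frac{|f(x)-f_N(x)|}{\lambda}\right)^{p(x)} w(x)\,dx \longrightarrow 0,
\]
by dominated convergence; the dominating function $(|f|/\lambda)^{p(\cdot)} w$ is integrable because $p_+ < \infty$ allows one to pass from any $\lambda_0$ with finite modular $\rho(f/\lambda_0)$ to arbitrary $\lambda > 0$ via a simple rescaling estimate. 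The Luxemburg definition of the norm then yields $\|f-f_N\|_{L^{p(\cdot)}(w)} \to 0$.

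For the second step, fix $g$ bounded with $\mathrm{supp}\,g \subset B(0,R)$, and a standard mollifier $\varphi \in C^\infty_{\rm c}(\mathbb{R}^n)$ with $\varphi \ge 0$, $\int \varphi = 1$, and $\mathrm{supp}\,\varphi \subset B(0,1)$. Setting $g_\varepsilon := g * \varepsilon^{-n}\varphi(\cdot/\varepsilon)$ for $\varepsilon \in (0,1)$, I have $g_\varepsilon \in C^\infty_{\rm c}(\mathbb{R}^n)$, pointwise convergence $g_\varepsilon \to g$ almost everywhere, and the uniform bound $|g_\varepsilon - g| \le 2\|g\|_\infty \chi_{B(0,R+1)}$. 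The $A^{\rm loc}_{p(\cdot)}$ condition supplies $\|\chi_Q\|_{L^{p(\cdot)}(w)} < \infty$ for every cube of volume at most one, so covering $B(0,R+1)$ by finitely many such cubes places $\chi_{B(0,R+1)}$ in $L^{p(\cdot)}(w)$. Dominated convergence inside the modular, combined with the Luxemburg definition, again gives $\|g - g_\varepsilon\|_{L^{p(\cdot)}(w)} \to 0$.

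The only nontrivial input is verifying that the dominating function in each step actually belongs to $L^{p(\cdot)}(w)$, and this is exactly where the $A^{\rm loc}_{p(\cdot)}$ condition enters. An alternative route that makes Proposition \ref{thm:main} explicit is to use the pointwise bound $|g * \varphi_\varepsilon| \lesssim M^{\rm loc} g$ (valid for $\varepsilon \in (0,1)$), so that the $L^{p(\cdot)}(w)$-boundedness of $M^{\rm loc}$ produces a single universal dominating function $M^{\rm loc} g \in L^{p(\cdot)}(w)$ for the entire family $\{g_\varepsilon\}_{\varepsilon\in(0,1)}$, after which dominated convergence closes the argument in one stroke.
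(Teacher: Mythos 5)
Your proposal is correct and follows essentially the same two-step route as the paper: first reduce to approximating functions in $L^\infty_{\rm c}$, then smooth by convolution with a compactly supported mollifier. The paper outsources the first step to a ``Lebesgue convergence theorem'' from \cite{INS2014} and, for the second step, uses exactly your ``alternative route'': it bounds the mollifications $j^n\tau(j\cdot)*f$ pointwise by $M^{\rm loc}f$ and then applies the cited convergence theorem, relying on Proposition~\ref{thm:main} only implicitly through the fact that $M^{\rm loc}f\in L^{p(\cdot)}(w)$ is a legitimate dominating function. What you add is a more self-contained version of both steps: the truncation is handled directly with the modular and the rescaling trick that $p_+<\infty$ permits, and the mollification admits the cruder dominating function $2\|g\|_\infty\chi_{B(0,R+1)}$, whose membership in $L^{p(\cdot)}(w)$ follows from the local integrability of $w$, which is itself a consequence of $w\in A^{\rm loc}_{p(\cdot)}$ (finiteness of $\|\chi_Q\|_{L^{p(\cdot)}(w)}$ for small cubes). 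Either dominating function works; yours avoids invoking the maximal inequality altogether, which is a mild gain in elementarity.
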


Remark that this is an analogue
of \cite[Theorem 2.8]{INS2015}.

\begin{proof}
By the Lebesgue convergence theorem
obtained in \cite{INS2014},
we have only to approximate functions
in $L^\infty_{\rm c}$,
the set of all essentially bounded functions with compact support.
Let $f \in L^\infty_{\rm c}$.
Choose a non-negative function
$\tau \in C^\infty_{\rm c}([-1,1]^n)$
with $L^1$-norm $1$ and consider
$j^n\tau(j\cdot)*f$
for each $j \in {\mathbb N}$.
Then we know that
$|j^n\tau(j\cdot)*f| \lesssim M^{\rm loc} f$
for all $j \in \mathbb{N}$.
We know that $j^n\tau(j\cdot)*f(x) \to f(x)$
for almost all $x \in {\mathbb R}^n$
as $j \to \infty$
by the Lebesgue differentiation theorem.
Thus, once again we can use the Lebesgue convergence theorem
mentioned above to have
$j^n\tau(j\cdot)*f \to f$
as $j \to \infty$.
\end{proof}
\section{Generalized local singular integral operators}
\label{s3}

We prove that the generalized local singular integral operators
are bounded on $L^{p(\cdot)}(w)$ if the postulates
in
Proposition \ref{thm:main}
are satisfied.
There are several ways to prove
the boundedness of  the generalized (local) 
singular integral operators.
One technique is to transform the sharp maximal inequality
obtained by Fefferman and Stein \cite{FeSt72}
to the form adapted to our function spaces.
Here, we use the modified version considered
by Hyt\"{o}nen \cite{Hytonen14} and Lerner \cite{Lerner13-2}.
For $Q \in {\mathcal Q}$,
the median of $f \in {\mathcal M}(Q)$, which is close to 
$(\chi_{Q} f)^*\left(2^{-1}|Q|\right)$,
will be an important role in the sequel.
We use the following notation:
The mean oscillation
of $f$ over a cube $Q$
of level $\lambda \in(0,1)$ is given by
$
\omega_\lambda(f;Q)
\equiv
\inf\limits_{c \in {\mathbb C}}
((f-c)\chi_{Q})^*(\lambda|Q|).
$

\begin{definition}[Median]\label{defi:150824-41}
\index{median@median}
\index{${\rm Med}(f;Q)$}
Let $Q \in {\mathcal Q}$
and $f \in {\mathcal M}(Q)$ be a real-valued function.
Define
$
{\rm MED}(f;Q)\equiv\{a \in {\mathbb R}\,:\,
a\mbox{ satisfies (\ref{eq:131112-23})}\},
$
where condition $(\ref{eq:131112-23})$ is given by
\begin{equation}\label{eq:131112-23}
|\{x \in Q\,:\,f(x)>a\}|, \quad
|\{x \in Q\,:\,f(x)<a\}|
\le \frac{1}2|Q|.
\end{equation}
Denote by ${\rm Med}(f;Q)$
any element in ${\rm MED}(f;Q)$.
The quantity ${\rm Med}(f;Q)$ is called the median of $f$
over $Q$.
One can regard ${\rm Med}(f;Q)$
as if it were a mapping
$Q \mapsto {\rm Med}(f;Q)$.
\end{definition}

We now recall the notion of sparseness.
A set of cubes ${\mathfrak A}$ is 
sparse,
if
there exists a disjoint collection
$\{K(Q)\}_{Q\in{\mathfrak A}}$ 
such that
$K(Q)$ contained in $Q$ and 
$2|K(Q)| \ge |Q|$
for each $Q\in{\mathfrak A}$.
Each $K(Q)$ is called
the nutshell of $Q$.
We invoke the following result obtained by Hyt\"{o}nen \cite{Hytonen14} and Lerner \cite{Lerner13-2}.

\begin{lemma}\label{lem:LH}
Let
$Q \in {\mathcal D}$,
and let
$g \in {\mathcal M}(Q)$ be a real-valued function.
Then
there exists a sparse family
${\mathcal S}(Q)
\subset {\mathcal D}(Q)$,
which depends on $g$,
such that
$Q \in {\mathcal S}(Q)$ and
\[
\chi_{Q}|g-{\rm Med}(g;Q)|
\le 
\sum_{S \in {\mathcal S}(Q)}
\omega_{2^{-n-2}}(g;S)\chi_{S}
\]
almost everywhere.
\end{lemma}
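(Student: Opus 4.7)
The plan is to build $\mathcal{S}(Q)$ generation by generation via a Calder\'on--Zygmund stopping time, then verify both sparseness and the pointwise domination by telescoping the medians across generations. Set $\mathcal{S}_0 \equiv \{Q\}$. Inductively, given $R \in \mathcal{S}_k$, consider the bad set
\[
E_R \equiv \{x \in R \,:\, |g(x) - {\rm Med}(g;R)| > \omega_{2^{-n-2}}(g;R)\}.
\]
Comparing ${\rm Med}(g;R)$ with a near-optimal center in the definition of $\omega_{2^{-n-2}}(g;R)$ yields $|E_R| \le c_n \cdot 2^{-n-2}|R|$ for a dimensional constant $c_n$. Run the dyadic Calder\'on--Zygmund stopping time on $\chi_{E_R}$ at a level $\alpha \in (0,1)$, producing the maximal dyadic subcubes $\mathcal{C}(R)$ of $R$ on which the dyadic mean of $\chi_{E_R}$ exceeds $\alpha$, and set $\mathcal{S}_{k+1} \equiv \bigcup_{R \in \mathcal{S}_k}\mathcal{C}(R)$ and $\mathcal{S}(Q) \equiv \bigcup_{k \ge 0}\mathcal{S}_k$.

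Sparseness is verified by taking $K(R) \equiv R \setminus \bigcup_{R' \in \mathcal{C}(R)}R'$ as the nutshell. If $\alpha \ge 2c_n \cdot 2^{-n-2}$, then $\sum_{R' \in \mathcal{C}(R)}|R'| \le \alpha^{-1}|E_R| \le |R|/2$, hence $2|K(R)| \ge |R|$. Pairwise disjointness of the $K(R)$'s holds because any cube of $\mathcal{S}(Q)$ strictly contained in $R$ lies inside $\bigcup_{R' \in \mathcal{C}(R)}R'$ and thus outside $K(R)$; moreover, the disjointness of same-generation cubes gives the induction $|\bigcup_{R \in \mathcal{S}_k}R| \le 2^{-k}|Q|$, so the residual set $\bigcap_k \bigcup_{R \in \mathcal{S}_k}R$ is null.

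For the pointwise estimate, for almost every $x \in Q$ the cubes of $\mathcal{S}(Q)$ containing $x$ form a finite nested chain $Q = R_0 \supsetneq R_1 \supsetneq \cdots \supsetneq R_J$ with $x \in K(R_J)$. Telescoping,
\[
g(x) - {\rm Med}(g;Q) = \bigl(g(x) - {\rm Med}(g;R_J)\bigr) + \sum_{i=1}^{J}\bigl({\rm Med}(g;R_i) - {\rm Med}(g;R_{i-1})\bigr).
\]
Since $x \in K(R_J)$, dyadic Lebesgue differentiation applied to $\chi_{E_{R_J}}$ gives $|g(x) - {\rm Med}(g;R_J)| \le \omega_{2^{-n-2}}(g;R_J)$ a.e. For each median jump, the maximality of $R_i$ in $\mathcal{C}(R_{i-1})$ together with $2^n \alpha < 1/2$ ensures $|R_i \cap E_{R_{i-1}}|/|R_i| \le 2^n \alpha < 1/2$, i.e., less than half of $R_i$ belongs to $E_{R_{i-1}}$; this sandwiches ${\rm Med}(g;R_i)$ inside $[{\rm Med}(g;R_{i-1}) - \omega_{2^{-n-2}}(g;R_{i-1}), {\rm Med}(g;R_{i-1}) + \omega_{2^{-n-2}}(g;R_{i-1})]$. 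Summing the chain and using $\chi_{R_i}(x) = 1$ gives
\[
|g(x) - {\rm Med}(g;Q)| \le \sum_{i=0}^{J}\omega_{2^{-n-2}}(g;R_i) \le \sum_{S \in \mathcal{S}(Q)} \omega_{2^{-n-2}}(g;S)\chi_S(x).
\]

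The main obstacle is the simultaneous calibration of the stopping level $\alpha$ and the oscillation parameter $2^{-n-2}$: sparseness demands $\alpha \ge 2c_n \cdot 2^{-n-2}$, while the median-jump control demands $\alpha < 2^{-n-1}$. The factor $2^{-n-2}$ in the statement is chosen precisely so that these two constraints are compatible, which is the heart of the Hyt\"onen--Lerner argument in \cite{Hytonen14,Lerner13-2}.
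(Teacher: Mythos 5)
The paper does not prove Lemma~\ref{lem:LH}; it cites it from Hyt\"onen \cite{Hytonen14} and Lerner \cite{Lerner13-2}, so your proposal is reconstructing an argument the authors take as known. Your architecture --- a Calder\'on--Zygmund stopping time on the level sets $E_R$, sparseness via the $\alpha^{-1}$ packing bound, and a median--telescope along the stopping chain closed off by dyadic differentiation at a nutshell point --- is exactly the local mean oscillation decomposition those references prove, so the approach is the right one.

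The genuine gap is the measure estimate for $E_R$, which you assert and then build everything on. Comparing ${\rm Med}(g;R)$ with a near-minimizer $c_0$ of $c \mapsto ((g-c)\chi_R)^*(2^{-n-2}|R|)$ yields $|{\rm Med}(g;R)-c_0|\le\omega_{2^{-n-2}}(g;R)$, and then
\[
\bigl|\{x\in R:|g(x)-{\rm Med}(g;R)|>2\,\omega_{2^{-n-2}}(g;R)\}\bigr|\le 2^{-n-2}|R|,
\]
with threshold $2\omega$, not $\omega$. For your $E_R$ --- threshold $\omega$ --- the estimate $|E_R|\le c_n 2^{-n-2}|R|$ with $c_n<1$ is not merely unproved; it is false in general for an arbitrary admissible median. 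Take $n=1$, $R=[0,1]$, and $g=0$ on $[0,\tfrac12)$, $g=1$ on $[\tfrac12,\tfrac12+\delta)$, $g=2$ on $[\tfrac12+\delta,1]$ with $\delta$ small: then $\omega_{1/8}(g;R)=1$, the median set is $[0,1]$, and for the admissible choice ${\rm Med}(g;R)=0$ one has $|E_R|=\tfrac12-\delta$, far exceeding $c_1\cdot 2^{-3}$ for any $c_1<1$. (This example in fact shows that with the median $0$ the inequality of the Lemma itself cannot hold with coefficient one: on most of $[\tfrac12+\delta,1]$ the left side equals $2$, yet every proper dyadic subcube $S$ of $[0,1]$ meeting that region satisfies $\omega_{1/8}(g;S)=0$, so the right side is at most $\omega_{1/8}(g;[0,1])=1$.) Consequently your two calibration requirements, $\alpha\ge 2c_n 2^{-n-2}$ for sparseness and $2^n\alpha<\tfrac12$ for the median sandwich, force $c_n<1$ and cannot be met. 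If you instead set the threshold at $2\omega_{2^{-n-2}}(g;R)$ the measure bound holds and the calibration closes, but then the base term of the telescope gives $|g(x)-{\rm Med}(g;R_J)|\le 2\,\omega_{2^{-n-2}}(g;R_J)$ and the final estimate carries a factor $2$, i.e.\ Lerner's version of the decomposition. Removing that factor is precisely the content of Hyt\"onen's refinement; your closing paragraph flags the tension as ``the heart of the Hyt\"onen--Lerner argument'' but does not supply the argument, which is where the proof is incomplete.
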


We estimate
the mean oscillation of $T f$ 
in the next lemma.
\begin{lemma}\label{lem:191130-2}
Let $T$ be a generalized local singular integral operator,
whose kernel $K$ is supported on 
$\{(x,y)\,:\,x-y \in[-\gamma,\gamma]^n\}$
with some $\gamma \in {\mathbb N}$.
Then for any cube $Q$ with $|Q| \le 1$ and
$f \in L^2$,
we have
\[
\omega_{2^{-n-2}}(T f;Q) 
\lesssim \inf\limits_{y \in Q}(M^{\rm loc})^{2\gamma+3} f(y).
\]
\end{lemma}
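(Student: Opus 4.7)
Fix $y \in Q$ arbitrarily; the desired bound in terms of $\inf_{y \in Q}(M^{\rm loc})^{2\gamma+3}f(y)$ will follow once the inequality is established with a single $y \in Q$ on the right-hand side. Let $\widetilde{Q}$ denote the cube concentric with $Q$ of side $\ell(Q)+2\gamma$, so that $Q+[-\gamma,\gamma]^n \subseteq \widetilde{Q}$, and let $Q^{*}=a_n Q$ where $a_n>1$ is a dimensional constant chosen so that H\"{o}rmander's inequality applies whenever $x \in Q$ and $z \notin Q^{*}$. Split $f=f_1+f_2+f_3$ with $f_1=f\chi_{Q^{*}}$, $f_2=f\chi_{\widetilde{Q}\setminus Q^{*}}$ and $f_3=f\chi_{{\mathbb R}^n\setminus \widetilde{Q}}$. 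Since (\ref{eq:sing3}) forces $K(x,z)=0$ whenever $x \in Q$ and $z \notin \widetilde{Q}$, we have $Tf_3 \equiv 0$ on $Q$. Choose $c:=Tf_2(y)$, which is well-defined as an absolutely convergent integral because $y \notin {\rm supp}(f_2)$, so that for every $x \in Q$,
\[
|Tf(x)-c| \le |Tf_1(x)|+|Tf_2(x)-Tf_2(y)|.
\]

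For the near piece, I would invoke the weak-type $(1,1)$ boundedness of $T$, a consequence of its $L^2$-boundedness combined with (\ref{eq:sing2}) via the standard Calder\'{o}n--Zygmund decomposition. Since $\|f_1\|_{L^1}=\int_{Q^{*}}|f|$, one obtains
\[
(Tf_1\chi_Q)^{*}(2^{-n-3}|Q|) \lesssim \frac{1}{|Q|}\int_{Q^{*}}|f(z)|\,{\rm d}z,
\]
which is a dimensional constant times the mean of $|f|$ over the cube $Q^{*}$, whose side length is at most $a_n\ell(Q) \le a_n$.

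For the far piece, (\ref{eq:sing2}) coupled with the dyadic $\ell^{\infty}$-annular decomposition of $\widetilde{Q}\setminus Q^{*}$ around $y$ at scales $2^k\ell(Q)$ for $0 \le k \le K$ with $2^K\ell(Q) \lesssim \gamma$ gives, after the routine geometric summation exploiting $|x-y| \lesssim \ell(Q)$,
\[
|Tf_2(x)-Tf_2(y)| \lesssim \sum_{k=0}^{K}2^{-k}\frac{1}{|C_k|}\int_{C_k}|f| \lesssim \max_{0\le k \le K}\frac{1}{|C_k|}\int_{C_k}|f|,
\]
uniformly in $x \in Q$, where $C_k$ is the cube centered at $y$ of side comparable to $2^k\ell(Q) \le C\gamma$.

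The main obstacle is to dominate these mean values by $(M^{\rm loc})^{2\gamma+3}f(y)$. I would prove by induction on $L \in {\mathbb N}$ that, for every cube $C$ containing $y$ with $\ell(C) \le L$, one has $|C|^{-1}\int_{C}|f| \lesssim_L (M^{\rm loc})^{L+1}f(y)$: the base case $L=1$ is the definition of $M^{\rm loc}$, while the inductive step covers $C$ by a unit cube $U \ni y$ together with finitely many shifted unit cubes reachable from points of $U$, using the elementary estimate $\int_U M^{\rm loc}f \gtrsim \int_{U+[-1/2,1/2]^n}|f|$. Since $Q^{*}$ has side bounded by a dimensional constant and each $C_k$ has side at most $2\gamma+2$, choosing $L+1 = 2\gamma+3$ converts both of the previous estimates into constant multiples of $(M^{\rm loc})^{2\gamma+3}f(y)$. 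The inequality $\omega_{2^{-n-2}}(Tf;Q) \le ((Tf-c)\chi_Q)^{*}(2^{-n-2}|Q|) \lesssim (Tf_1\chi_Q)^{*}(2^{-n-3}|Q|)+\sup_{x\in Q}|Tf_2(x)-Tf_2(y)|$ then yields the desired bound, after which taking the infimum over $y \in Q$ completes the proof.
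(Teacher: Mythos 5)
Your proof is correct and follows essentially the same route as the paper's: a near/far splitting of $f$ relative to $Q$ using the kernel's support, the weak-$(1,1)$ bound for the near piece, H\"{o}rmander's condition with a dyadic annular decomposition for the far piece, and iterates of $M^{\rm loc}$ to absorb averages over cubes of side $\lesssim\gamma$. The only substantive difference is that you make explicit (via the induction on the reach of $(M^{\rm loc})^{L+1}$) the step the paper merely asserts, namely that $|C|^{-1}\int_C|f|\lesssim (M^{\rm loc})^{2\gamma+3}f(y)$ for cubes $C\ni y$ of side at most $2\gamma+2$; this is a welcome addition rather than a deviation.
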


\begin{proof}
Let $Q(x,R)$ denote the cube of length $2R$ centered at $x$ in ${\mathbb R}^n$
in the proof.
First of all, we note that
\[
T f(x)=T[\chi_{Q(c(Q),\gamma+1)}f](x)
\]
for all $x \in Q$.
With this in mind, we set
\[
\alpha\equiv\int_{Q(c(Q),\gamma+1) \setminus 3Q}K(c(Q),y)f(y){\rm d}y.
\]
Then 
\begin{align*}
|T f(x)-\alpha|
&\lesssim
\int_{Q(c(Q),\gamma+1) \setminus 3Q}|K(x,y)-K(c(Q),y))||f(y)|dy
+|T[\chi_{3Q}f](x)|\\ 
&\lesssim
\int_{Q(c(Q),\gamma+1) \setminus 3Q}
\frac{\ell(Q)}{|z-c(Q)|^{n+1}}|f(z)|{\rm d}z
+|T[\chi_{3Q}f](x)|.
\end{align*}
Let $J_0 \in {\mathbb N}$ be the smallest integer
such that $2^{J_0}Q \supset Q(c(Q),\gamma+1)$.
We estimate
\begin{align*}
\int_{Q(c(Q),\gamma+1) \setminus 3Q}
\frac{\ell(Q)}{|z-c(Q)|^{n+1}}|f(z)|{\rm d}z
&\le
\sum_{j=1}^{J_0}
\int_{2^j Q \setminus 2^{j-1}Q}
\frac{\ell(Q)}{|z-c(Q)|^{n+1}}|f(z)|{\rm d}z\\
&\lesssim
\sum_{j=1}^{J_0}
\frac{1}{2^{j(n+1)}|Q|}
\int_{2^j Q}|f(z)|{\rm d}z.
\end{align*}
Note that
\[
\frac{1}{2^{j n}|Q|}
\int_{2^j Q}|f(z)|{\rm d}z
\lesssim
\inf_{y \in Q}
(M^{\rm loc})^{2\gamma+3} f(y)
\]
for all $j=1,2,\ldots,J_0$.
Meanwhile,
\[
\omega(T[\chi_{3Q}f];Q) \lesssim \dfrac{1}{|Q|}\int_Q|f(x)|{\rm d}x\lesssim
\inf_{y \in Q}
(M^{\rm loc})^{2\gamma+3} f(y)
\]
by virtue of the weak-$L^1$ boundedness of $T$.
Inserting these estimates into the above chain of inequalities,
we obtain
the desired result.
\end{proof}

We will prove the boundedness
of
generalized local Calder\'{o}n--Zygmund operators,
based on Lemma \ref{lem:LH}.

\begin{theorem}\label{thm:191125-2}
Suppose that $p(\cdot) \in {\mathcal P}
\cap {\rm L H}_0 \cap {\rm L H}_{\infty}$.
Let $T$ be a
generalized local Calder\'{o}n--Zygmund operator.
Let also $w \in A_{p(\cdot)}^{\rm loc}$.
Then
$T$, initially defined for $L^\infty_{\rm c}$,
can be extended to a bounded linear operator
on $L^{p(\cdot)}(w)$.
\end{theorem}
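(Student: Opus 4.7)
My plan is to establish a pointwise sparse domination of the form
\[
|Tf(x)|\lesssim (M^{\rm loc})^{K}f(x)+\mathcal{A}_{\mathcal{S}}[(M^{\rm loc})^{2\gamma+3}f](x),\qquad \mathcal{A}_{\mathcal{S}}F(x)\equiv \sum_{S\in\mathcal{S}}\langle F\rangle_S\chi_S(x),
\]
for every $f\in L^\infty_{\rm c}$, with a sparse family $\mathcal{S}$ of cubes of volume at most one and an integer $K=K(\gamma)$. Once this is in hand, duality against the associate space $L^{p'(\cdot)}(\sigma)$, two invocations of Proposition~\ref{thm:main} (on $w$ and on the dual weight $\sigma$), and the density of $C^\infty_{\rm c}$ in $L^{p(\cdot)}(w)$ (already available in Section~\ref{s2}) close the argument.

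For the pointwise bound, I would tile ${\mathbb R}^n$ by unit dyadic cubes $\{Q^{(j)}\}_{j\in{\mathbb Z}^n}$ and apply Lemma~\ref{lem:LH} to $g=Tf$ on each $Q_0=Q^{(j)}$ to obtain a sparse family $\mathcal{S}(Q_0)\subset \mathcal{D}(Q_0)$. Since every $S\in\mathcal{S}(Q_0)$ has $|S|\le 1$ and the kernel is supported in the strip $|x-y|\le \gamma$, Lemma~\ref{lem:191130-2} controls each oscillation by $\inf_{y\in S}(M^{\rm loc})^{2\gamma+3}f(y)$. The median contribution I handle separately: because $Tf|_{Q_0}$ depends only on $f|_{Q(c(Q_0),\gamma+1)}$, the classical weak-$(1,1)$ estimate for $T$ (standard from the H\"ormander condition together with $L^2$-boundedness) gives
\[
|{\rm Med}(Tf;Q_0)|\le (|Tf|\chi_{Q_0})^*(|Q_0|/2)\lesssim \|f\|_{L^1(Q(c(Q_0),\gamma+1))}\lesssim \inf_{y\in Q_0}(M^{\rm loc})^{K}f(y),
\]
and setting $\mathcal{S}\equiv \bigsqcup_j\mathcal{S}(Q^{(j)})$, which remains sparse by disjointness of the $Q^{(j)}$, yields the displayed inequality.

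For the norm estimate, the definition of $A^{\rm loc}_{p(\cdot)}$ is symmetric in $(w,p(\cdot))\leftrightarrow(\sigma,p'(\cdot))$, so $\sigma\in A^{\rm loc}_{p'(\cdot)}$ and Proposition~\ref{thm:main} applies on both sides. For $h\ge 0$ with $\|h\|_{L^{p'(\cdot)}(\sigma)}\le 1$ and $G\equiv(M^{\rm loc})^{2\gamma+3}f$, the disjoint nutshells $K(S)\subset S$ with $|S|\le 2|K(S)|$ give
\[
\int \mathcal{A}_{\mathcal{S}}G\cdot h\,{\rm d}x=\sum_S\langle G\rangle_S\langle h\rangle_S|S|\le 2\sum_S\int_{K(S)}M^{\rm loc}G\cdot M^{\rm loc}h\,{\rm d}x\le 2\int M^{\rm loc}G\cdot M^{\rm loc}h\,{\rm d}x,
\]
and the generalized H\"older inequality (Lemma~\ref{Lemma-Banach-associate}) together with the two maximal bounds controls the right-hand side by $\|G\|_{L^{p(\cdot)}(w)}$. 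Iterating Proposition~\ref{thm:main} then dominates both $\|G\|_{L^{p(\cdot)}(w)}$ and $\|(M^{\rm loc})^{K}f\|_{L^{p(\cdot)}(w)}$ by $\|f\|_{L^{p(\cdot)}(w)}$, concluding the estimate $\|Tf\|_{L^{p(\cdot)}(w)}\lesssim \|f\|_{L^{p(\cdot)}(w)}$.

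The main obstacle is keeping the entire argument localized to cubes of volume at most one, so that the full maximal operator $M$ is never needed; this is ensured by choosing $Q_0$ of unit size, by the kernel's support, and by the fact that $\mathcal{S}(Q_0)\subset\mathcal{D}(Q_0)$. A second delicate point is the median: one must avoid a circular use of $T$'s own $L^{p(\cdot)}(w)$-boundedness, which I achieve via the weak-$(1,1)$ detour and a fixed finite iteration of $M^{\rm loc}$ to translate the non-local cube $Q(c(Q_0),\gamma+1)$ into a unit-scale average. Every other ingredient--sparse-operator duality, iteration of $M^{\rm loc}$, generalized H\"older, and density--is already in place.
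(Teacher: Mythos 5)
Your proposal is correct, and the pointwise half of it coincides with the paper's argument: tile $\mathbb{R}^n$ by unit dyadic cubes, apply the Hyt\"onen--Lerner decomposition (Lemma~\ref{lem:LH}) on each tile with $g=Tf$, control the oscillations by Lemma~\ref{lem:191130-2} using the kernel support, and control the median via the weak-$(1,1)$ bound on the slightly enlarged cube $Q(c(Q_0),\gamma+1)$. (The paper phrases the median bound as $|\mathrm{Med}(Tf;Q)|=|\mathrm{Med}(T[\chi_{(\gamma+1)Q}f];Q)|\lesssim \langle |f|\rangle_{(\gamma+1)Q}$; your route through the decreasing rearrangement is the same estimate.)

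Where you genuinely diverge is the final norm step. The paper replaces each $\chi_S$ by $(M^{\mathrm{loc}}\chi_{K(S)})^\theta$ with $K(S)$ the nutshells and then invokes the vector-valued maximal inequality (Proposition~\ref{thm 191031-3}) to pass from $(M^{\mathrm{loc}}\chi_{K(S)})^\theta$ back to $\chi_{K(S)}$; the disjointness of the nutshells then gives $\sum_{Q,S}\inf_S(M^{\mathrm{loc}})^{2\gamma+3}f\cdot\chi_{K(S)}\le (M^{\mathrm{loc}})^{2\gamma+3}f$. You instead bound the sparse operator by duality against $h\ge0$ in the associate space $L^{p'(\cdot)}(\sigma)$, using the nutshells to reduce to $2\int M^{\mathrm{loc}}G\cdot M^{\mathrm{loc}}h\,\mathrm{d}x$ and then applying generalized H\"older plus the \emph{scalar} maximal bound twice, once on $L^{p(\cdot)}(w)$ and once on $L^{p'(\cdot)}(\sigma)$. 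This is a legitimate and in fact somewhat lighter alternative: it needs only Proposition~\ref{thm:main}, not the extrapolation-based vector-valued statement. The points you flag as requiring care are exactly the right ones and both check out: $[\sigma]_{A^{\mathrm{loc}}_{p'(\cdot)}}=[w]_{A^{\mathrm{loc}}_{p(\cdot)}}$ by the visible symmetry of the definition, $p'(\cdot)\in\mathcal{P}\cap\mathrm{LH}_0\cap\mathrm{LH}_\infty$ because $t\mapsto t/(t-1)$ is Lipschitz on $[p_-,p_+]$, and $(L^{p(\cdot)}(w))'=L^{p'(\cdot)}(\sigma)$ with the Lorentz--Luxemburg theorem recovering $\|\cdot\|_{L^{p(\cdot)}(w)}$ up to constants from the duality pairing. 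In short: same sparse domination, different but equally valid dispatch of the sparse operator.
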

\begin{proof}
Let $f \in L^\infty_{\rm c}$.
By decomposing the integral kernel and the function $f$
we may assume that these are real-valued.
Let 
${\mathcal D}_0$
be the set of all dyadic cubes of volume $1$.
We decompose
\begin{align*}
|T f(x)|
&=
\sum_{Q \in {\mathcal D}_0}
|T f(x)|\chi_Q(x)\\
&\le
\sum_{Q \in {\mathcal D}_0}
(|T f(x)-{\rm Med}(T f;Q)|+|{\rm Med}(T f;Q)|)\chi_Q(x).
\end{align*}
Fix $Q \in {\mathcal D}_0$ for the time being.
For the first term, we use the Lerner--Hyt\"{o}nen decomposition
to have a sparse family ${\mathcal S}(Q)$ such that
\[
|T f(x)-{\rm Med}(T f;Q)|
\le 
\sum_{S \in {\mathcal S}(Q)}
\omega_{2^{-n-2}}(T f;S)\chi_{S}(x)
\]
for almost all $x \in {\mathbb R}^n$.
Since $T$ is a generalized local singular integral operator,
there exists $\gamma \in {\mathbb N}$ such that
the kernel is supported on
$\{(x,y)\,:\,x-y \in[-\gamma,\gamma]^n\}$.
For this $\gamma \in {\mathbb N}$,
\[
\omega_{2^{-n-2}}(T f;S) 
\lesssim \inf\limits_{y \in S}(M^{\rm loc})^{2\gamma+3} f(y).
\]
thanks to Lemma \ref{lem:191130-2}.
Meanwhile, by the definition of median
and the weak $L^1$-boundedness of $T$,
we have
\begin{align*}
|{\rm Med}(T f;Q)|
&=
|{\rm Med}(T[\chi_{(\gamma+1)Q} f];Q)|\\
&\lesssim
\frac{1}{|(\gamma+1)Q|}\int_{(\gamma+1)Q}|f(x)|{\rm d}x\\
&\lesssim
\inf\limits_{y \in Q}(M^{\rm loc})^{2\gamma+3} f(y).
\end{align*}
We define
\[
K(S)\equiv S \setminus \bigcup_{R \in {\mathcal S}(Q)}R.
\]
Then we have $2|K(S)|\ge|S|$.
Putting these observations all together,
we obtain
\begin{align*}
|T f(x)|
&\lesssim
\sum_{Q \in {\mathcal D}_0}
\sum_{S \in {\mathcal S}(Q)}
\inf\limits_{y \in S}(M^{\rm loc})^{2\gamma+3}f(y)
\chi_{S}(x)+
\sum_{Q \in {\mathcal D}_0}
\inf\limits_{y \in Q}(M^{\rm loc})^{2\gamma+3} f(y)
\chi_{Q}(x)\\
&\lesssim
\sum_{Q \in {\mathcal D}_0}
\sum_{S \in {\mathcal S}(Q)}
\inf\limits_{y \in S}(M^{\rm loc})^{2\gamma+3}f(y)
M^{\rm loc}\chi_{K(S)}(x)^\theta
+
(M^{\rm loc})^{2\gamma+3} f(x)
\end{align*}
for any $\theta>0$.
Thus, choosing $\theta>0$ suitably,
by virtue of Proposition \ref{thm 191031-3}
we obtain
\begin{align*}
\lefteqn{
\|T f\|_{L^{p(\cdot)}(w)}
}\\
&\lesssim
\left\|
\sum_{Q \in {\mathcal D}_0}
\sum_{S \in {\mathcal S}(Q)}
\inf\limits_{y \in S}(M^{\rm loc})^{2\gamma+3}f(y)
(M^{\rm loc}\chi_{K(S)})^\theta
\right\|_{L^{p(\cdot)}(w)}
+
\|(M^{\rm loc})^{2\gamma+3}f\|_{L^{p(\cdot)}(w)}\\
&\lesssim
\left\|
\sum_{Q \in {\mathcal D}_0}
\sum_{S \in {\mathcal S}(Q)}
\inf\limits_{y \in S}(M^{\rm loc})^{2\gamma+3}f(y)
\chi_{K(S)}
\right\|_{L^{p(\cdot)}(w)}
+
\|(M^{\rm loc})^{2\gamma+3}f\|_{L^{p(\cdot)}(w)}\\
&\lesssim
\|(M^{\rm loc})^{2\gamma+3}f\|_{L^{p(\cdot)}(w)}\\
&\lesssim
\|f\|_{L^{p(\cdot)}(w)},
\end{align*}
as required.
\end{proof}

\section{Proof of Theorem \ref{thm:191125-1}}
\label{s4}

We will suppose that $f$ is a real-valued function
keeping in mind that $\varphi$ and $\psi$
are real-valued.
Once we prove the right-hand estimate,
then the left-hand estimate is automatically 
obtained by a duality argument.
In fact,
we have
\begin{align*}
\int_{{\mathbb R}^n}f(x)g(x){\rm d}x
&=
\sum_{k \in {\mathbb Z}^n}
\langle f,\varphi_{J,k} \rangle
{\langle g,\varphi_{J,k} \rangle}
+
\sum_{l=1}^{2^n-1}
\sum_{j=J}^\infty
\sum_{k\in \mathbb{Z}^n}
\langle f,\psi_{j,k}^l \rangle 
{\langle g,\psi_{j,k}^l \rangle}
\end{align*}
for all real-valued functions
$f,g \in L^\infty_{\rm c}$
and hence by using H\"{o}lder's inequality twice, we have
\begin{align*} 
\left|
\int_{{\mathbb R}^n}f(x){g(x)}{\rm d}x
\right|
\lesssim
\left\|V f
\right\|_{L^{p(\cdot)}(w)}
\left\|V g
\right\|_{L^{p'(\cdot)}(\sigma)}
+
\left\|W_1f
\right\|_{L^{p(\cdot)}(w)}
\left\|W_1g
\right\|_{L^{p'(\cdot)}(\sigma)}.
\end{align*}
Since we know that
\begin{eqnarray*}
\left\|V g
\right\|_{L^{p'(\cdot)}(\sigma)}
+
\left\|W_1g
\right\|_{L^{p'(\cdot)}(\sigma)}
\lesssim
\|g\|_{L^{p'(\cdot)}(\sigma)},
\end{eqnarray*}
we obtain 
\[
\|f\|_{L^{p(\cdot)}(w)}
\lesssim
\|V f\|_{L^{p(\cdot)}(w)}
+\|W_1f\|_{L^{p(\cdot)}(w)}.
\]
A similar argument also yields
\[
\|f\|_{L^{p(\cdot)}(w)}
\lesssim
\|V f\|_{L^{p(\cdot)}(w)}
+\|W_2f\|_{L^{p(\cdot)}(w)}.
\]
By a simple limiting argument,
we may assume $f \in L^\infty_{\rm c}$
to prove
\[
\|V f\|_{L^{p(\cdot)}(w)}
+\|W_1f\|_{L^{p(\cdot)}(w)}
\lesssim
\|f\|_{L^{p(\cdot)}(w)}.
\]

Note that 
$$\left|  \mathrm{supp}\varphi_{J,k} \right|
= \left|  \prod_{m=1}^n \left[  2^{-J}k_m,2^{-J}(2N-1+k_m)  \right] \right|
= \left(  2^{-J}(2N-1) \right)^n.
$$
Then we see that
\[
Vf(x)\lesssim (M^{\mathrm{loc}})^{4N+10}f(x).
\]
Thus,
it follows from Proposition \ref{thm:main}
that
\[
\|V f\|_{L^{p(\cdot)}(w)}
\lesssim
\|f\|_{L^{p(\cdot)}(w)}.
\]

We will show 
\begin{equation}\label{eq:191125-3}
\|W_1 f\|_{L^{p(\cdot)}(w)}
\le C
\|f\|_{L^{p(\cdot)}(w)}.
\end{equation}
Let
$j \in {\mathbb Z} \cap [J,\infty)$
and
$k \in {\mathbb Z}^n$.
Since
\[
2^{j n}\int_{Q_{j,k'}}|\varphi_{j,k}(x)|\,{\rm d}x \ge C2^{\frac{j n}{2}}
\]
as long as 
$k' \in {\mathbb Z}^n$
satisfies
$Q_{j,k'} \cap Q_{j,k} \ne \emptyset$,
we have $\chi_{j,k} \le C(M^{\mathrm{loc}})^{4N+10}\varphi_{j,k}$.
Thus, the proof of 
\[
\|W_2 f\|_{L^{p(\cdot)}(w)}
\le C
\|f\|_{L^{p(\cdot)}(w)}
\]
follows immediately
from
\[
\|W_1 f\|_{L^{p(\cdot)}(w)}
\le C
\|f\|_{L^{p(\cdot)}(w)}.
\]
We remark that
\[
\left\|\left(
\sum_{l=1}^{2^n-1}
\sum_{j=J}^\infty
\sum_{k\in \mathbb{Z}^n} \left| 
\langle f,\psi_{j,k}^l \rangle \psi_{j,k}^l \right|^2 \right)^{\frac12}
\right\|_{L^{p(\cdot)}(w)}
\le C
\|f\|_{L^{p(\cdot)}(w)}.
\]
Let
$\Theta=(\Theta_1,\Theta_2,\Theta_3):{\mathbb N} \to
({\mathbb N} \cap [1,2^n-1])
\times
({\mathbb N} \cap [J,\infty))
\times
{\mathbb Z}^n$
be a bijection.
Then we have
\begin{align*}
\lefteqn{
\left(
\sum_{l=1}^{2^n-1}
\sum_{j=J}^\infty
\sum_{k\in \mathbb{Z}^n} \left| 
\langle f,\psi_{j,k}^l \rangle \psi_{j,k}^l \right|^2 \right)^{\frac12}
}\\
&=
\left(
\sum_{\mu=1}^\infty
\left| 
\langle f,\psi_{\Theta_1(\mu),\Theta_2(\mu)}^{\Theta_3(\mu)} \rangle 
\psi_{\Theta_1(\mu),\Theta_2(\mu)}^{\Theta_3(\mu)} \right|^2 \right)^{\frac12}\\
&\lesssim
\left(
\frac{1}{\pi}
\int_0^{2\pi}
\left|
\sum_{\mu=1}^\infty
{\rm sgn}(\sin(2^\mu t))
\langle f,\psi_{\Theta_1(\mu),\Theta_2(\mu)}^{\Theta_3(\mu)} \rangle 
\psi_{\Theta_1(\mu),\Theta_2(\mu)}^{\Theta_3(\mu)}\right|^{p_-}
{\rm d}t \right)^{\frac{1}{p_-}}
\end{align*}
thanks to the property of the Rademacher sequence
$\{{\rm sgn}(\sin(2^\mu \cdot))\}_{\mu=1}^\infty$
\cite{Sawano-text-2018}.
Thus, by Minkowski's inequality, we have only to show that
\[
f \mapsto 
\sum_{\mu=1}^\infty
{\rm sgn}(\sin(2^\mu t))
\langle f,\psi_{\Theta_1(\mu),\Theta_2(\mu)}^{\Theta_3(\mu)} \rangle 
\psi_{\Theta_1(\mu),\Theta_2(\mu)}^{\Theta_3(\mu)}
\]
is a generalized local singular integral operator.
However, it is a generalized singular integral operator
with the constants independent of $t$ and $l$
from the result \cite{Woj-book}.
Since the functions
$\varphi$ and $\psi^l$ 
 ($l=1,2, \ldots , 2^n-1$)
are compactly supported,
it follows that the operator in question is a local  generalized singular integral operator
with the constants independent of $t$ and $l$.
Consequently, we obtain
(\ref{eq:191125-3}) by virtue of Theorem \ref{thm:191125-2}.

\section{Modular inequalities for wavelet characterizations}\label{s5}

Similar to \cite[Theorem 4.3]{INS2015},
we can prove that
some modular inequalities for wavelet characterizations fail  
unless the exponent is constant.

\begin{theorem}
Suppose that $p(\cdot) \in {\mathcal P}
\cap {\rm L H}_0 \cap {\rm L H}_{\infty}$.
Let $w \in A_{p(\cdot)}^{\rm loc}$ and $j^* \in {\mathbb Z}$.
Then the following four conditions are equivalent:
\begin{enumerate}
\item[$\mathrm{(X1)}$]
$p(\cdot)$ is a constant.
\item[$\mathrm{(X2)}$]
For 
all $f \in L^{p(\cdot)}(w)$,
\[
\int_{{\mathbb R}^n}
\left| 
\sum_{l=1}^{2^n-1}
\sum_{k\in \mathbb{Z}^n} 
\langle f,\psi_{j^*,k}^l \rangle \psi_{j^*,k}^l (x)
\right|^{p(x)}w(x){\rm d}x
\lesssim
\int_{{\mathbb R}^n}|f(x)|^{p(x)}w(x){\rm d}x.
\]

\item[$\mathrm{(X3)}$]
For all $f \in L^{p(\cdot)}(w)$,
\[
\int_{{\mathbb R}^n}V f(x)^{p(x)}w(x){\rm d}x
+
\int_{{\mathbb R}^n}W_1 f(x)^{p(x)}w(x){\rm d}x
\sim
\int_{{\mathbb R}^n}|f(x)|^{p(x)}w(x){\rm d}x.
\]
\item[$\mathrm{(X4)}$]
For all $f \in L^{p(\cdot)}(w)$,
\[
\int_{{\mathbb R}^n}V f(x)^{p(x)}w(x){\rm d}x
+
\int_{{\mathbb R}^n}W_2 f(x)^{p(x)}w(x){\rm d}x
\sim
\int_{{\mathbb R}^n}|f(x)|^{p(x)}w(x){\rm d}x.
\]
\end{enumerate}
\end{theorem}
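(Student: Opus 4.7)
The implication $\mathrm{(X1)} \Rightarrow \mathrm{(X3)}, \mathrm{(X4)}$ is immediate from Theorem~\ref{thm:191125-1}: when $p(\cdot) \equiv p$ is constant, the modular $\int_{\mathbb R^n} |g|^p w\,dx$ equals $\|g\|_{L^p(w)}^p$, so the modular equivalences are simply the $p$-th powers of the established norm equivalences. For $\mathrm{(X1)} \Rightarrow \mathrm{(X2)}$, observe that the wavelet projection
$P_{j^*} f := \sum_{l, k} \langle f, \psi_{j^*, k}^l\rangle \psi_{j^*, k}^l$
is a generalized local Calder\'on--Zygmund operator: its kernel $K(x, y) = \sum_{l, k} \psi_{j^*, k}^l(x) \psi_{j^*, k}^l(y)$ is supported in $|x-y| \lesssim 2^{-j^*}$ thanks to the compact support of each $\psi^l$, it is $L^2$-bounded as an orthogonal projection onto $W_{j^*}$, and the H\"ormander condition follows from the $C^1$-regularity of $\psi^l$. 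Theorem~\ref{thm:191125-2} then yields the $L^p(w)$-boundedness of $P_{j^*}$, which, raised to the $p$-th power, is (X2).

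\textbf{Hard direction.} I would prove the converses $\mathrm{(X2)}, \mathrm{(X3)}, \mathrm{(X4)} \Rightarrow \mathrm{(X1)}$ by contrapositive, following closely the scheme of \cite[Theorem~4.3]{INS2015}. Assume $p(\cdot)$ is not essentially constant. The linearity of $V, W_1, W_2, P_{j^*}$ and the substitution $f \mapsto \lambda f$ converts any of (X2)--(X4) into an inequality of the form
\[
\int_{\mathbb R^n} \lambda^{p(x)} G(x)^{p(x)} w(x)\,dx \lesssim \int_{\mathbb R^n} \lambda^{p(x)} |f(x)|^{p(x)} w(x)\,dx, \qquad \lambda > 0,
\]
where $G$ denotes the appropriate wavelet quantity ($V f + W_1 f$, $V f + W_2 f$, or $P_{j^*} f$). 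The key geometric observation is that $G$ has support essentially larger than $\operatorname{supp}f$ by $O(2^{-J})$ (respectively $O(2^{-j^*})$), due to the compact supports of the wavelet kernels. Testing with $f = \varphi_{j_0, k_0}$ for $j_0 > \max\{J, j^*\}$ localizes $f$ in a tiny cube near $x_0 := 2^{-j_0} k_0$, while $G$ is supported on a full cube of size $\sim 2^{-\max\{J, j^*\}}$ around $x_0$. If $p(\cdot)$ is not constant on this larger cube, pick $x_1$ inside with $p(x_1) \neq p(x_0)$; then the left side scales like $\lambda^{p(x_1) + o(1)}$ and the right like $\lambda^{p(x_0) + o(1)}$, so letting either $\lambda \to \infty$ or $\lambda \to 0$ (depending on the sign of $p(x_1) - p(x_0)$) violates the inequality. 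Hence $p(\cdot)$ must be constant on every such cube; varying $x_0$ and using that the cubes cover $\mathbb R^n$ with overlap yields $p(\cdot)$ globally constant.

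\textbf{Main obstacle.} The delicate point is to extract the clean $\lambda^{p(x_i)}$ asymptotics despite the log-H\"older fluctuations of $p(\cdot)$ and the variation of $w$ on the effective supports. This requires using the compact supports of $\varphi$ and $\psi^l$ together with the orthogonality between $V_J$ and $\{W_j\}_{j \geq J}$ to ensure $G(x_1)>0$ (so that the wavelet kernel genuinely reaches $x_1$), and the $A_{p(\cdot)}^{\mathrm{loc}}$-property to prevent the weight from distorting the leading-order scaling. These geometric inputs are exactly those used in \cite{IzukiGMJ, INS2015} and transfer to the local-weight setting with only routine modifications.
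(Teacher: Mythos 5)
Your proposal follows essentially the same route as the paper. The paper's own proof is a three-line sketch: it invokes Theorem~\ref{thm:191125-1} (or Lemari\'e) for $\mathrm{(X1)}\Rightarrow\mathrm{(X2)}$--$\mathrm{(X4)}$, observes that $\mathrm{(X3)}$ and $\mathrm{(X4)}$ are stronger than $\mathrm{(X2)}$ (so only $\mathrm{(X2)}\Rightarrow\mathrm{(X1)}$ needs proof), and then defers $\mathrm{(X2)}\Rightarrow\mathrm{(X1)}$ to \cite[Theorem~4.3]{INS2015}. You expand that final citation into an explicit sketch using the test functions $f=\varphi_{j_0,k_0}$ and the $\lambda$-scaling of the modular, which is indeed the mechanism of \cite{IzukiGMJ,INS2015}. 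The only structural difference is that you attack all three converses $\mathrm{(X2)},\mathrm{(X3)},\mathrm{(X4)}\Rightarrow\mathrm{(X1)}$ in parallel rather than funnel through $\mathrm{(X2)}$; this costs nothing but is slightly redundant, since the pointwise bound $|P_{j^*}f|\lesssim W_1 f$ (bounded overlap of the $\psi^l_{j^*,k}$ plus Cauchy--Schwarz) makes $\mathrm{(X3)}\Rightarrow\mathrm{(X2)}$ immediate once $J\le j^*$. One minor inaccuracy to fix in the write-up: the phrase ``the left side scales like $\lambda^{p(x_1)+o(1)}$'' is not quite right as stated, since each side is an integral and the relevant exponents are $\sup p$ over the respective supports (for $\lambda\to\infty$) or $\inf p$ (for $\lambda\to 0$); the point of taking $j_0\to\infty$ and invoking ${\rm LH}_0$ is precisely to make $\sup_{{\rm supp}f}p$ collapse to $p(x_0)+o(1)$, while the support of $G$ stays at scale $2^{-j^*}$, so the exponent gap survives. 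You flag this issue yourself under ``Main obstacle,'' and the resolution you indicate is the correct one, so this is a matter of precision rather than a gap.
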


Fix $j^* \in {\mathbb Z}$.
Remark that
\[
f \mapsto
\sum_{l=1}^{2^n-1}
\sum_{k\in \mathbb{Z}^n} 
\langle f,\psi_{j^*,k}^l \rangle \psi_{j^*,k}^l (x)
\]
is the orthogonal projection from $L^2$ to a linear space spanned by
\[
\{\psi_{j^*,k}^l\,:\,l=1,2,\ldots,2^n-1, k \in {\mathbb Z}^n\}.
\]

\begin{proof}
From Theorem \ref{thm:191125-1} or the result
in \cite{Lemarie},
(X2)--(X4) hold once we assume (X1) holds.
Conditions (X3) and (X4) are clearly stronger than (X2).
We can prove that (X2) implies (X1)
as we did in \cite[Theorem 4.3]{INS2015}.
\end{proof}

\section*{Acknowledgements} 
\noindent
Mitsuo Izuki was partially supported by Grand-in-Aid for Scientific Research (C), No.\,15K04928, for Japan Society for the Promotion of Science. 
Takahiro Noi was partially supported by Grand-in-Aid for Young Scientists (B), No.\,17K14207, for Japan Society for the Promotion of Science. 
Yoshihiro Sawano was partially supported by Grand-in-Aid for Scientific Research (C), No.\,19K03546, for Japan Society for the Promotion of Science. 
This work was partly supported by Osaka City University Advanced Mathematical Institute (MEXT Joint Usage/Research Center on Mathematics and Theoretical Physics).


Mitsuo Izuki,\\
Faculty of Liberal Arts and Sciences, \\
Tokyo City University, \\
1-28-1, Tamadutsumi Setagaya-ku Tokyo 158-8557, Japan. \\ 
E-mail: izuki@tcu.ac.jp

\smallskip

Toru Nogayama,\\
Department of Mathematical Science,\\
Tokyo Metropolitan University,\\
Hachioji, 192-0397, Japan.\\
E-mail: toru.nogayama@gmail.com

\smallskip

Takahiro Noi,\\
Department of Mathematical Science,\\
Tokyo Metropolitan University,\\
Hachioji, 192-0397, Japan.\\
E-mail: taka.noi.hiro@gmail.com

\smallskip

Yoshihiro Sawano (Corresponding author),\\
Department of Mathematical Science,\\
Tokyo Metropolitan University,\\
Hachioji, 192-0397, Japan.\\
+\\
People's Friendship University of Russia.\\
\\
E-mail: yoshihiro-sawano@celery.ocn.ne.jp

\end{document}